\newtheorem{thm}{Theorem}[section]
\newtheorem{prop}[thm]{Proposition}
\newtheorem{cor}[thm]{Corollary}
\newtheorem{lem}[thm]{Lemma}
\theoremstyle{definition}
\newtheorem{defn}[thm]{Definition}
\theoremstyle{remark}
\newtheorem{ex}[thm]{Example}
\newcommand{\cat}[1]{\mathfrak{#1}}
\newcommand{\set}{\mathbf{Set}}
\newcommand{\qord}{\mathbf{QOrd}}
\newcommand{\wsetc}{\mathbf{WSet_{1}}}
\newcommand{\schc}{\mathbf{Sch_{1}}}
\newcommand{\metc}{\mathbf{Met_{1}}}
\newcommand{\wsetg}{\mathrm{w}\mathbf{Set}}
\newcommand{\metg}{\mathrm{d}\mathbf{Mtr}}
\newcommand{\digra}{\mathbf{Digra}}
\newcommand{\ssys}{\mathbf{SSys}}
\newcommand{\gra}{\mathbf{Gra}}
\newcommand{\sdigra}{\mathbf{SDigra}}
\newcommand{\istr}{\mathbf{IStr}}
\newcommand{\spa}{\mathbf{Spa}}
\DeclareMathOperator{\card}{card}
\DeclareMathOperator{\ob}{Ob}
\DeclareMathOperator{\del}{Del}
\title[Simplification \& Incidence]{
Simplification \& Incidence:
How an Incidence-focused Perspective Patches Category-theoretic Problems in Graph Theory
}
\author{Will Grilliette}
\address{National Security Agency, Fort George G Meade MD, MD 20755-6844, USA}
\begin{document}

\begin{abstract}
By applying simplification operations to categories of multigraphs,
several natural graph operations are shown to demonstrate categorical issues.
The replacement of an undirected edge with a directed cycle for digraphs admits both a left and a right adjoint,
while the analogous operation for quivers only admits a left adjoint.
The clique-replacement graph,
intersection graph,
and dual hypergraph fail to be functorial with traditional graph homomorphisms.
The three failures are remedied by considering weak set-system homomorphisms,
which form a category isomorphic to both the category of incidence structures and a lax comma category.
\end{abstract}

\maketitle

\section{Introduction}

In \cite{ih3,grilliette2023},
several issues were found in the classical categories of graphs and hypergraphs.
These issues were addressed by creating the category of incidence hypergraphs,
a presheaf topos,
and utilizing its structure.
This paper continues by applying the simplification operators from \cite{simplification1} to transfer the categorical properties from a category of graphs to its simplified counterpart.
In so doing,
many parallels arise naturally,
such as deletion of nontraditional edges for hypergraphs and set systems
and replacing undirected edges with directed cycles for quivers and digraphs.

However,
while these operations are quite similar and greatly parallel each other,
they behave quite differently.
For instance,
the operation of replacing undirected edges with directed cycles for digraphs admits both a right and a left adjoint adjoint,
acting as the embedding of a reflective and coreflective subcategory,
but the analogous operation for quivers only admits a left adjoint.
Thus,
one cannot merely regard similar graph-like structures as the same.
Such intuitive reasoning can lead one to fallacy,
which becomes more evident when considering other graph operations,
such as the clique replacement graph,
intersection graph,
and the dual hypergraph.
Each of these three operations is proven not to be a functor when using traditional graph homomorphisms,
contrary to the claims in \cite[p.\ 187-188]{dorfler1980}.

Fortunately,
each of these issues can be remedied.
First,
each type of graph-like structure is compartmented into its own category to prevent any potential conflation between them,
and how these structures interact is studied.
Many of them are similar,
but the finer details highlight important distinctions,
such as presence of adjoints.
Second,
a weakened version of graph homomorphism is introduced and proven to be intimately connected with the notions related to incidence hypergraphs.
Surprisingly,
the category of incidence structures is isomorphic to the category of set-system hypergraphs with these weak homomorphisms,
and the isomorphism is precisely the incidence-forming and incidence-forgetting operations from \cite[p.\ 22-23]{grilliette2023},
but viewed through the lens of weak homomorphisms.
Finally,
the clique-replacement graph,
intersection graph,
and dual hypergraph are revisited and shown to operate far more pleasantly under the incidence-theoretic paradigm than under the adjacency perspective.

The above results do beg an important question.
Several natural graph operations do not behave well with traditional graph homomorphisms,
but do when applied to incidence-preserving maps.
Moreover,
these graph operations appear to be natural incidence-theoretic operations obfuscated under layers of conversions between different types of graphs.
Indeed,
the incidence-forming and incidence-forgetting operators from \cite[p.\ 22-23]{grilliette2023} are an isomorphism of categories,
but blurred by simplification and use of traditional graph homomorphisms.
The clique-replacement graph manifests naturally from a kernel pair in the category of sets,
but is buried under traditional graph homomorphisms and simplification operations.
The intersection graph is just the incidence dual from \cite[p.\ 13]{ih3},
but masked by simplicial replacement,
simplification,
and deletion.
Are the extra conversions necessary?
Perhaps it would be simpler and more elegant to work directly with the core operations,
rather than digging through layers of obfuscation to perform a task.

Section \ref{sec:fallacy} addresses the concerns of the polysemous usage of ``graph'' and,
more worrisome,
the fallacy of following intuition without rigorous proof.
Section \ref{sec:incidenceview} considers a view of graph homomorphisms through incidence,
rather than adjacency,
introducing weak homomorphisms of hypergraphs.
Section \ref{sec:cliquereplacement} revisits the clique-replacement construction,
now viewed through the weak homomorphisms of Section \ref{subsec:weakhomomorphisms}.
Section \ref{sec:duality} returns to the dual hypergraph,
showing that it is functorial when considering weak homomorphisms.
Using the dual hypergraph,
the intersection graph can be reformed by inserting the duality inside the composition forming the modified clique replacement.

Before progressing into the main points,
some preliminaries are recalled to reference concepts and set notation,
which will be used throughout the paper.
These topics include the category of sets,
the category of quasi-ordered sets,
and the lax comma category.

\subsection{Preliminaries}

This section covers some preliminaries,
which will be used throughout this paper.
Section \ref{subsubsec:sets} discusses three key functors on the category of sets:
the diagonal functor $\Delta$,
its right adjoint $\Delta^{\star}$,
and the covariant power-set functor $\mathcal{P}$.
Section \ref{subsubsec:quasiordered} likewise describes four important functors on the category of quasi-ordered sets:
the underlying-set functor $F$,
its left adjoint $F^{\diamond}$,
the covariant power-set functor $\mathcal{P}^{+}$,
and the dual quasi-order autofunctor $\Box^{\mathrm{T}}$.
Section \ref{subsubsec:laxcomma} presents the lax comma category,
which uses two 2-functors to produce a new 2-category.
Many of these topics are explained with more depth in the literature of category of category theory,
such as \cite{joyofcats,borceux1,grandis,gray,maclane}.

\subsubsection{Category of Sets}\label{subsubsec:sets}

This section briefly introduces some key functors on the category $\set$ of sets with functions.
The functors found here will be used,
implicitly or explicitly,
throughout the entirety of this paper.

The \emph{diagonal functor} $\xymatrix{\set\ar[r]^(0.4){\Delta} & \set\times\set}$ sends a set $X$ to the pair $(X,X)$ \cite[p.\ 62]{maclane}.
From \cite[p.\ 87]{maclane},
$\Delta$ admits a right adjoint functor $\xymatrix{\set\times\set\ar[r]^(0.6){\Delta^{\star}} & \set}$,
which sends a pair of sets $(X,Y)$ to their cartesian product $X\times Y$.
The composition $\xymatrix{\set\ar[r]^{\Delta^{\star}\Delta} & \set}$ yields the \emph{squaring functor},
which sends a set $X$ to its cartesian product with itself \cite[Example 5.38.1]{joyofcats}.

The \emph{covariant power-set functor} $\xymatrix{\set\ar[r]^{\mathcal{P}} & \set}$ sends a set $X$ to its power set.
If $\xymatrix{X\ar[r]^{f} & Y}\in\set$,
then $\xymatrix{\mathcal{P}(X)\ar[r]^{\mathcal{P}(f)} & \mathcal{P}(Y)}\in\set$ has the action
\[
\mathcal{P}(f)(A):=\left\{
f(x)
:
x\in A
\right\},
\]
sending $A$ to its image under $f$ \cite[p.\ 13]{maclane}.

\subsubsection{Category of Quasi-ordered Sets}\label{subsubsec:quasiordered}

This section briefly introduces some key functors on the category $\qord$ of quasi-ordered sets with monotone maps.
Please note that $\qord$ is a 2-category,
much like its subcategory of partially-ordered sets \cite[p.\ 277]{grandis}.
The 0-cells are quasi-ordered sets,
and the 1-cells are monotone maps.
For $\xymatrix{P\ar@/^/[r]^{\phi}\ar@/_/[r]_{\psi} & Q}\in\qord$,
there is at most one 2-cell from $\phi$ to $\psi$ determined by the pointwise ordering:
$\phi\leq\psi$ if and only if $\phi(x)\leq_{Q}\psi(x)$ for all $x\in P$.

The \emph{underlying-set functor} $\xymatrix{\qord\ar[r]^{F} & \set}$,
which strips away the order structure,
admits a left adjoint $\xymatrix{\set\ar[r]^{F^{\diamond}} & \qord}$ given by the trivial ordering \cite[Proposition 3]{benini2019}.
One can regard $F^{\diamond}$ as a 2-functor by imbuing $\set$ with trivial 2-cells.

Another key functor from $\set$ to $\qord$ is the covariant power-set functor.
Indeed,
for any set $X$,
one can equip $\mathcal{P}(X)$ with a natural ordering of subsets,
which will be denoted $\mathcal{P}^{+}(X)$.
Likewise,
for any function $\xymatrix{X\ar[r]^{f} & Y}\in\set$,
an exercise shows that the function $\mathcal{P}(f)$ from $\mathcal{P}^{+}(X)$ to $\mathcal{P}^{+}(Y)$ is monotone.
The notation $\mathcal{P}^{+}(f)$ will denote when $\mathcal{P}(f)$ is considered as a monotone map.
Thus,
$\xymatrix{\set\ar[r]^{\mathcal{P}^{+}} & \qord}$ is a 2-functor like $F^{\diamond}$.

The last functor of interest is the \emph{dual-quasi-order autofunctor}.
For a quasi-ordered set $P$,
the \emph{dual quasi-order} is defined by $x\leq_{P^{\mathrm{T}}} y$ if $y\leq_{P} x$ \cite[p.\ 33]{eklund}.
Let $P^{\mathrm{T}}$ be the set $P$ equipped with the dual quasi-order.
For any monotone map $\xymatrix{P\ar[r]^{\phi} & Q}\in\qord$,
an exercise shows that $\phi$ is monotone from $P^{\mathrm{T}}$ to $Q^{\mathrm{T}}$.
The notation $\phi^{\mathrm{T}}$ will denote when $\phi$ is considered with the dual quasi-orders.
Thus,
$\xymatrix{\qord\ar[r]^{\Box^{\mathrm{T}}} & \qord}$ is a self-inverting 2-functor.

\subsubsection{Lax Comma Category}\label{subsubsec:laxcomma}

This section summarizes the construction of a lax comma category from a pair of 2-functors with a common codomain.
Much like the classical comma category \cite[Definition 1.6.1]{borceux1},
the lax comma category produces a 2-category,
which blends the three 2-categories involved in the 2-functors used in its definition.
The definition given below is modified from \cite[p.\ 145]{gray1980},
in that the 2-cells are reversed.
This choice is made to remove the categorical dualities present in \cite[p.\ 29-30]{gray} and better align with the natural ordering of the real line in Examples \ref{ex:weighted} and \ref{ex:metric}.

\begin{defn}[Lax comma category]
Let $\xymatrix{\cat{A}\ar[r]^{F} & \cat{C} & \cat{B}\ar[l]_{G}}$ be 2-functors.
Define the 2-category $\left(F\backslash\backslash G\right)$ in the following way:
\begin{itemize}

\item a 0-cell is a triple $(A,f,B)$,
where $\xymatrix{F(A)\ar[r]^{f} & G(B)}$ is a 1-cell in $\cat{C}$;

\item a 1-cell from $(A,f,B)$ to $\left(A',f',B'\right)$ is a triple $(x,\alpha,y)$,
where $\xymatrix{A\ar[r]^{x} & A'}$ is a 1-cell in $\cat{A}$,
$\xymatrix{B\ar[r]^{y} & B'}$ is a 1-cell in $\cat{B}$,
and $\xymatrix{f'\circ F(x)\ar[r]^{\alpha} & G(y)\circ f}$ is a 2-cell in $\cat{C}$;
\[\xymatrix{
F(A)\ar[d]_{f}\ar[r]^{F(x)}    &   F\left(A'\right)\ar[d]^{f'}\ar@{=>}[dl]|-{\alpha}\\
G(B)\ar[r]_{G(y)}    &   G\left(B'\right)
}\]

\item composition of 1-cells
\[\xymatrix{
(A,f,B)
\ar[rr]^{(x,\alpha,y)}
&
&
\left(A',f',B'\right)
\ar[rr]^{\left(x',\alpha',y'\right)}
&
&
\left(A'',f'',B''\right)
}\]
is given by
\[
\left(x',\alpha',y'\right)\circ(x,\alpha,y):=\left(
x'\circ x,
\left(id_{G\left(y'\right)}\ast\alpha\right)\odot\left(\alpha'\ast id_{F(x)}\right),
y'\circ y\right);
\]

\item a 2-cell from $(x,\alpha,y)$ to $\left(z,\beta,w\right)$ is a pair $(\sigma,\tau)$,
where $\xymatrix{x\ar[r]^{\sigma} & z}$ is a 2-cell in $\cat{A}$
and $\xymatrix{y\ar[r]^{\tau} & w}$ is a 2-cell in $\cat{B}$ such that
$\beta\odot\left(id_{f'}\ast F(\sigma)\right)=\left(G(\tau)\ast id_{f}\right)\odot\alpha$;
\[\xymatrix{
f'\circ F(x)\ar[d]_{\alpha}\ar[rr]^{id_{f'}\ast F(\sigma)}  &   &
f'\circ F(z)\ar[d]^{\beta}\ar@{}[dll]|-{=}\\
G(y)\circ f\ar[rr]_{G(\tau)\ast id_{f}} &   &
G(w)\circ f
}\]

\item horizontal composition, vertical composition, and identities for 2-cells are componentwise.

\end{itemize}
\end{defn}

The benefit of using lax comma categories over classical comma categories is,
as its name implies,
to relax the need for a commutative square.
Instead,
a 2-cell stands in place of commutativity.
The examples below illustrate how this relaxation manifests in weighted sets and metric spaces,
passing from isometric maps to contractive maps.

\begin{ex}[Weighted sets]\label{ex:weighted}
Let $\cat{1}$ the the discrete 2-category of a single object $1$,
and define a 2-functor $\xymatrix{\cat{1}\ar[r]^(0.4){K} & \qord}$ by $1\mapsto [0,\infty)$,
equipped with its usual ordering.
Then,
the lax comma category $\left(F^{\diamond}\backslash\backslash K\right)$ is isomorphic to the category $\wsetc$ of weighted sets with contractive maps \cite[Definition 2.1.1]{grilliette2015}.
On the other hand,
the traditional comma category $\left(F^{\diamond}\downarrow K\right)$ is isomorphic to the category of weighted sets with isometric maps.

Define a 2-functor $\xymatrix{\cat{1}\ar[r]^(0.4){K_{\infty}} & \qord}$ by $1\mapsto [0,\infty]$,
equipped with its usual ordering.
Then,
the lax comma category $\left(F^{\diamond}\backslash\backslash K_{\infty}\right)$ is isomorphic to the category $\wsetg$ of weighted sets with contractive maps,
where $\infty$ is allowed as a weight value \cite[p.\ 175-176]{grandis2007}.
\end{ex}

\begin{ex}[Metric spaces]\label{ex:metric}
The lax comma category $\left(F^{\diamond}\Delta^{\star}\Delta\backslash\backslash K_{\infty}\right)$ is isomorphic to the category $\schc$ defined in the following way:
\begin{itemize}
\item an object of $\schc$ is a \emph{schwach metric space},
a pair $\left(X,d_{X}\right)$ where $d_{X}:X\times X\to[0,\infty]$;
\item a morphism $\xymatrix{\left(X,d_{X}\right)\ar[r]^{f} & \left(X',d_{X'}\right)}\in\schc$ is a function $f:X\to X'$ such that $d_{X'}\left(f(x),f(y)\right)\leq d_{X}(x,y)$ for all $x,y\in X$.
\end{itemize}
Both the category $\metg$ of directed metric spaces with d-contractions \cite[p.\ 119]{grandis2004}
and the category $\metc$ of metric spaces with contractive maps \cite[Example 3.3.3.a]{joyofcats} are full subcategories of $\schc$.
On the other hand,
the traditional comma category $\left(F^{\diamond}\Delta^{\star}\Delta\downarrow K_{\infty}\right)$ is isomorphic to the category of schwach metric spaces with functions satisfying
$d_{X'}\left(f(x),f(y)\right)=d_{X}(x,y)$
for all $x\in X$,
which are precisely isometric maps.
\end{ex}

\section{The Fallacy of Intuition}\label{sec:fallacy}

This section addresses some concerns regarding traditional approaches to graph theory.
First,
the literature is inconsistent with the use of the term ``graph'',
where the word can take multiple different meanings:
a set with a family of unordered pairs (a simple graph),
a set with a family of ordered pairs (a digraph),
a pair of sets with a function from one set to the square of the other set (a quiver),
a pair of sets with a function from one set to the power set of the other set (a hypergraph).
While these meanings are interrelated,
they are by no means the interchangeable,
which can be vividly illustrated by considering the category of each class of object.
Together,
these different categories and the transformations between them form the diagram in Figure \ref{fig:diagram},
which pictorially represents how the structures interact.

Yet,
Figure \ref{fig:diagram} leads to the second,
and more worrisome,
concern.
Several functors in the diagram are mirrors of each other,
performing a similar operation but between different structures.
For example,
the deletion from set-system hypergraphs to multigraphs acts much the same as the deletion form set systems to graphs.
However,
while some functors mirror each other,
they do not act the same way.
Specifically,
the associated digraph functor mirrors the inclusion of symmetric digraphs into all digraphs,
but they do not and cannot have the same behavior.
Sadly,
the intuition of similar processes having similar behavior does not hold here.

The failure of intuition extends to the clique-replacement graph,
the intersection graph,
and the dual hypergraph.
In \cite[p.\ 187-188]{dorfler1980},
all three operations were reported to be functors with the details left to the reader.
Unfortunately,
none of these operations are functorial,
which can be shown with simple examples.
Consequently,
care must be taken when applying such intuition,
regardless of how obvious the intuition might seem to be.

Section \ref{subsec:classical} gathers together seven categories of graphs,
only two of which are legitimately isomorphic,
and their natural relationships to each other.
The section culminates in Figure \ref{fig:diagram},
which visually depicts these connections.
Section \ref{subsec:cliqueintersection} briefly considers clique-replacement and intersection graphs,
and highlights their failure to be functorial.
Section \ref{subsec:dualhypergraph} also addresses the dual hypergraph and,
like the clique-replacement and intersection graphs,
shows the construction not to be functorial.

\subsection{A Diagram of Classical Graph Theory}\label{subsec:classical}

This section pulls together natural operations between different types of graphs,
culminating in Figure \ref{fig:diagram}.
Notably,
while several adjunctions are present in Figure \ref{fig:diagram},
there is only one legitimate isomorphism or equivalence.
These functors will be utilized later in this paper,
and this section will establish notation for each operator.

Yet, these functors are used here to demonstrate a particular failure of intuition.
Specifically,
for simple graphs,
an undirected graph can be viewed as a symmetric digraph,
with all the categorical structure intact.
Unfortunately,
the same cannot be said for set-system multigraphs,
which cannot be embedded into the category of quivers.
Therefore,
care must be used when attempting to lift results from one type of graph to another,
no matter how intuitive they might seem.

Section \ref{subsubsec:digraphs} describes the connection between quivers and digraphs.
Section \ref{subsubsec:ssystems} details the relationship between set-system hypergraphs and set systems.
Section \ref{subsubsec:graphs} lastly considers the precarious placement of set-system multigraphs and graphs in the larger scheme of graph-theoretic objects.
In particular,
the category of graphs is isomorphic to the category of symmetric digraphs,
which is both reflective and coreflective in the larger category of all digraphs.
On the other hand,
the category of graphs is only coreflective in the larger category of all set systems,
showing that simple graphs seem more closely related to digraphs than set systems.

\subsubsection{Quivers \& Digraphs}\label{subsubsec:digraphs}

This section considers the relationship between directed graphs and directed multigraphs.
Directed multigraphs,
or quivers,
arise naturally as a category of presheaves $\cat{Q}:=\set^{\cat{E}}$,
where $\cat{E}$ is the finite category drawn below.
\[\xymatrix{
1\ar@/^/[rr]^{s}\ar@/_/[rr]_{t}   &   &   0
}\]
An object $Q=\left(\vec{V}(Q),\vec{E}(Q),\sigma_{Q},\tau_{Q}\right)$ of $\cat{Q}$ consists of a vertex set $\vec{V}(Q)$,
an edge set $\vec{E}(Q)$,
a source map $\sigma_{Q}$,
and a target map $\tau_{Q}$,
which agrees with \cite{joyofcats,bumby1984,raeburn,schiffler}.

On the other hand,
the category of digraphs,
or relations,
is more easily understood as a functor-structured category $\digra:=\spa\left(\Delta^{\star}\Delta\right)$.
An object $Q=\left(\vec{V}(Q),\vec{E}(Q)\right)$ of $\digra$ consists of a vertex set $\vec{V}(Q)$ and a set of ordered pairs $\vec{E}(Q)\subseteq\vec{V}(Q)\times\vec{V}(Q)$,
which agrees with \cite{bang,godsil,hell}.
There is a natural embedding $\xymatrix{\digra\ar[r]^(0.6){\mathsf{N}_{\cat{Q}}} & \cat{Q}}$ with the following action:
\begin{itemize}

\item $\mathsf{N}_{\cat{Q}}(Q):=\left(\vec{V}(Q),\vec{E}(Q),\sigma_{Q},\tau_{Q}\right)$,
where $\sigma_{Q}(v,w):=v$ and $\tau_{Q}(v,w):=w$;

\item $\mathsf{N}_{\cat{Q}}(f):=\left(f,\vec{E}\mathsf{N}_{\cat{Q}}(f)\right)$,
where $\vec{E}\mathsf{N}_{\cat{Q}}(f)(v,w):=\left(f(v),f(w)\right)$.

\end{itemize}
As seen in \cite[p.\ 10]{simplification1},
$\mathsf{N}_{\cat{Q}}$ admits a left adjoint $\xymatrix{\cat{Q}\ar[r]^(0.4){\mathsf{S}_{\cat{Q}}} & \digra}$ with the action below on objects.
\[
\mathsf{S}_{\cat{Q}}(Q)
=\left(
\vec{V}(Q),
\left\{\left(\sigma_{Q}(e),\tau_{Q}(e)\right):e\in\vec{E}(Q)\right\}
\right)
\]

\subsubsection{Set Systems \& Hypergraphs}\label{subsubsec:ssystems}

This section considers the relationship between set-system hypergraphs and set systems.
Set-system hypergraphs naturally result from a comma category $\cat{H}:=\left(id_{\set}\downarrow\mathcal{P}\right)$.
An object $G=\left(E(G),\epsilon_{G},V(G)\right)$ of $\cat{H}$ consists of a vertex set $V(G)$,
an edge set $E(G)$,
and an incidence function $\epsilon_{G}$,
which agrees with \cite{dorfler1980,watkins1973}.

On the other hand,
the category of set systems is quickly built using a functor-structured category $\ssys:=\spa(\mathcal{P})$.
An object $G=\left(V(G),E(G)\right)$ of $\ssys$ consists of a vertex set $V(G)$ and a family of subsets $E(G)\subseteq\mathcal{P}V(G)$,
which agrees with \cite{bondy,hajiabolhassen2016,hammack2016}.
There is a natural embedding $\xymatrix{\ssys\ar[r]^{\mathsf{N}_{\cat{H}}} & \cat{H}}$ with the following action:
\begin{itemize}

\item $\mathsf{N}_{\cat{H}}(G):=\left(E(G),\epsilon_{G},V(G)\right)$,
where $\epsilon_{G}(A):=A$;

\item $\mathsf{N}_{\cat{H}}(f):=\left(E\mathsf{N}_{\cat{H}}(f),f\right)$,
where $E\mathsf{N}_{\cat{H}}(f)(A):=\mathcal{P}(f)(A)$.

\end{itemize}
As seen in \cite[p.\ 9]{simplification1},
$\mathsf{N}_{\cat{H}}$ admits a left adjoint $\xymatrix{\cat{H}\ar[r]^(0.4){\mathsf{S}_{\cat{H}}} & \ssys}$ with the action below on objects.
\[
\mathsf{S}_{\cat{H}}(G)
=\left(
V(G),
\left\{\epsilon_{G}(e):e\in E(G)\right\}
\right)
\]

\subsubsection{Multigraphs \& Simple Graphs}\label{subsubsec:graphs}

At last,
this section considers multigraphs,
simple graphs,
and their precarious place seated between quivers and set-system hypergraphs.
Indeed,
the functors of this section blend with the simplification functors of the previous two sections to produce Figure \ref{fig:diagram},
which captures various perspectives of graph theory and their interactions.

Let $\cat{M}$ be the full subcategory of $\cat{H}$ consisting of multigraphs,
i.e.\ set-system hypergraphs $G$ such that each edge has at least one endpoint and no more than two endpoints,
which agrees with \cite{bondy,dorfler1980}.
The inclusion $\xymatrix{\cat{M}\ar[r]^{N_{\cat{H}}} & \cat{H}}$ admits a right adjoint $\del$ given on objects by the deletion below \cite[Theorem 2.3.3]{grilliette2023}.
\[
\del(G)
=\left(
\left\{e\in E(G):1\leq\card\left(\epsilon_{G}(e)\right)\leq2\right\},
\left.\epsilon_{G}\right|^{E\del(G)},
V(G)
\right)
\]
A quiver can be transformed into a multigraph by stripping away the direction,
which is encoded into the underlying multigraph functor $\xymatrix{\cat{Q}\ar[r]^{U} & \cat{M}}$ with the following action:
\begin{itemize}
\item $U(Q):=\left(\vec{E}(Q),\epsilon_{U(Q)},\vec{V}(Q)\right)$,
where $\epsilon_{U(Q)}(e):=\left\{\sigma_{Q}(e),\tau_{Q}(e)\right\}$;
\item $U(\phi):=\left(\vec{E}(\phi),\vec{V}(\phi)\right)$.
\end{itemize}
By \cite[Theorem 2.3.7]{grilliette2023},
$U$ admits a right adjoint $\vec{D}$ defined on objects in the following way:
\[
\vec{D}(G)
=\left(
V(G),
\left\{(e,v,w):\epsilon_{G}(e)=\{v,w\}\right\},
\sigma_{\vec{D}(G)},
\tau_{\vec{D}(G)}
\right),
\]
where $\sigma_{\vec{D}(G)}(e,v,w):=v$ and $\tau_{\vec{D}(G)}(e,v,w):=w$.

On the other hand,
let $\gra$ be the full subcategory of $\ssys$ consisting of all conventional graphs,
i.e.\ set systems such that each edge has at least one endpoint and no more than two endpoints,
which agrees with \cite{bang,godsil,hell,kilp2001}.
Mimicking the multigraph result,
the inclusion $\xymatrix{\gra\ar[r]^{N_{\ssys}} & \ssys}$ admits a right adjoint functor in much the same way,
deleting all nontraditional edges.
The proof of the characterization is a simplified version of its multigraph counterpart.

\begin{defn}[Simple deletion]
Given a set system $H$,
define a graph $\del_{\mathcal{S}}(H):=\left(V(H),E\del_{\mathcal{S}}(H)\right)$,
where
\[
E\del_{\mathcal{S}}(H):=\left\{A\in E(H):1\leq\card(A)\leq 2\right\}.
\]
Let $\xymatrix{\del_{\mathcal{S}}(H)\ar[r]^(0.65){j_{H}} & H}\in\ssys$ be the canonical inclusion homomorphism from $\del_{\mathcal{S}}(H)$ into $H$.
\end{defn}

\begin{thm}[Characterization of $\del_{\mathcal{S}}$]
If $\xymatrix{N_{\ssys}\left(H'\right)\ar[r]^(0.65){f} & H}\in\ssys$,
there is a unique $\xymatrix{H'\ar[r]^(0.4){\hat{f}} & \del_{\mathcal{S}}(H)}\in\gra$ such that $j_{H}\circ N_{\ssys}\left(\hat{f}\right)=f$.
\end{thm}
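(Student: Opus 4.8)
The plan is to establish the couniversal property by the standard underlying-function argument, exploiting the fact that morphisms in $\ssys=\spa(\mathcal{P})$ are completely determined by their action on vertices. Recall that such a morphism is a vertex function $f$ satisfying $\mathcal{P}(f)(A)\in E'$ for every edge $A$ of its domain, and that two parallel morphisms of $\ssys$ coincide precisely when their underlying functions agree.

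First I would pin down the only possible candidate. Since $N_{\ssys}$ is a full-subcategory inclusion, $N_{\ssys}\left(\hat{f}\right)$ carries the same underlying function as $\hat{f}$; and because $V\left(\del_{\mathcal{S}}(H)\right)=V(H)$, the canonical inclusion $j_{H}$ carries the identity function $\mathrm{id}_{V(H)}$. Hence any $\hat{f}$ with $j_{H}\circ N_{\ssys}\left(\hat{f}\right)=f$ must have underlying function equal to $\mathrm{id}_{V(H)}\circ f=f$. This observation simultaneously fixes the unique candidate and delivers the uniqueness claim, so the remaining work is purely existential.

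Next I would define $\hat{f}\colon V\left(H'\right)\to V\left(\del_{\mathcal{S}}(H)\right)=V(H)$ to have the same underlying function as $f$ and verify it is a legitimate morphism of $\gra$. Let $A\in E\left(H'\right)$. Since $H'\in\gra$, the edge satisfies $1\leq\card(A)\leq 2$; since $f$ is a morphism of $\ssys$, we have $f(A)=\mathcal{P}(f)(A)\in E(H)$. The cardinality of an image never exceeds that of its source, so $\card\left(f(A)\right)\leq\card(A)\leq 2$, while $A\neq\emptyset$ forces $f(A)\neq\emptyset$; thus $1\leq\card\left(f(A)\right)\leq 2$, giving $f(A)\in E\del_{\mathcal{S}}(H)$. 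This confirms that $\hat{f}$ lands in $\del_{\mathcal{S}}(H)$ and is a well-defined morphism of $\gra$.

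Finally, the factorization $j_{H}\circ N_{\ssys}\left(\hat{f}\right)=f$ is immediate from the underlying-function computation above, and uniqueness was already secured. The main obstacle—indeed the only nontrivial content—is the cardinality bookkeeping showing $f(A)\in E\del_{\mathcal{S}}(H)$; the rest reduces to the facts that a function sends a set of size at most two to a set of size at most two and preserves nonemptiness. This is a simplified version of the argument for the multigraph deletion in \cite[Theorem 2.3.3]{grilliette2023}.
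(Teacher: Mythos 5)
Your proof is correct and is exactly the argument the paper intends: the paper omits the proof, noting only that it is ``a simplified version of its multigraph counterpart'' in \cite[Theorem 2.3.3]{grilliette2023}, and your cardinality bookkeeping on images of edges together with the observation that $\ssys$-morphisms are determined by their vertex functions supplies precisely that simplified version.
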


Much like \cite[Theorem 2.3.4]{grilliette2023},
$N_{\ssys}$ fails to preserve products.
Consequently,
$\gra$ is a coreflective subcategory of $\ssys$,
but not reflective.

One can now conjugate the simplification of $\cat{H}$ by the deletion adjunctions to spawn a simplification for $\cat{M}$.
Direct calculation shows that this simplification of a multigraph is the traditional simplification to the simple graph.
The proof is identical to a direct proof of the hypergraph and quiver cases in \cite[p.\ 9-10]{simplification1}.

\begin{defn}[Multigraph simplification]
Define the composite functors
\[\begin{array}{ccc}
\mathsf{N}_{\cat{M}}:=\del\mathsf{N}_{\cat{H}}N_{\ssys}
&
\textrm{and}
&
\mathsf{S}_{\cat{M}}:=\del_{\mathcal{S}}\mathsf{S}_{\cat{H}}N_{\cat{H}}.
\end{array}
\]
For a multigraph $G$,
define $\xymatrix{G\ar[r]^(0.4){\mu_{G}} & \mathsf{N}_{\cat{M}}\mathsf{S}_{\cat{M}}(G)}\in\cat{M}$ by
\begin{itemize}
\item $V\left(\mu_{G}\right)(v):=v$,
\item $E\left(\mu_{G}\right)(e):=\epsilon_{G}(e)$.
\end{itemize}
\end{defn}

\begin{thm}[Characterization of $\mathsf{S}_{\cat{M}}$]
If $\xymatrix{G\ar[r]^(0.4){\phi} & \mathsf{N}_{\cat{M}}\left(G'\right)}\in\cat{M}$,
there is a unique $\xymatrix{\mathsf{S}_{\cat{M}}\left(G\right)\ar[r]^(0.6){\hat{\phi}} & G'}\in\gra$ such that $\mathsf{N}_{\cat{M}}\left(\hat{\phi}\right)\circ\mu_{G}=\phi$.
\end{thm}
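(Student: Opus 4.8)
The plan is to recognize the stated factorization as the unit universal property witnessing the adjunction $\mathsf{S}_{\cat{M}}\dashv\mathsf{N}_{\cat{M}}$, with $\mu_{G}$ serving as the unit. Following the direct approach of \cite[p.\ 9-10]{simplification1}, the first step is to make the two composite functors explicit. Since $G$ is a multigraph, every endpoint set $\epsilon_{G}(e)$ already satisfies $1\leq\card(\epsilon_{G}(e))\leq 2$, so $\mathsf{S}_{\cat{H}}N_{\cat{H}}(G)=\left(V(G),\left\{\epsilon_{G}(e):e\in E(G)\right\}\right)$ lies in $\gra$ and the subsequent simple deletion $\del_{\mathcal{S}}$ deletes nothing; hence $\mathsf{S}_{\cat{M}}(G)=\left(V(G),\left\{\epsilon_{G}(e):e\in E(G)\right\}\right)$. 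Dually, for a simple graph $G'$ the incidence of $\mathsf{N}_{\cat{H}}N_{\ssys}\left(G'\right)$ is the inclusion, so every edge has between one and two endpoints and $\del$ is likewise vacuous, giving $\mathsf{N}_{\cat{M}}\left(G'\right)=\left(E\left(G'\right),\mathrm{incl},V\left(G'\right)\right)$.

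With these descriptions in hand, I would next decode the hypothesis. A morphism $\phi=\left(E(\phi),V(\phi)\right)$ from $G$ to $\mathsf{N}_{\cat{M}}\left(G'\right)$ in $\cat{M}\subseteq\cat{H}$ is a commuting incidence square, and because the target incidence is the inclusion this square reduces to the set-theoretic identity $E(\phi)(e)=\mathcal{P}(V(\phi))\left(\epsilon_{G}(e)\right)$ with $E(\phi)(e)\in E\left(G'\right)$ for every $e$. I would then define $\hat{\phi}$ to be the $\ssys$-morphism carried by the vertex function $V(\phi)$. The content to check is that $\hat{\phi}$ is a legitimate arrow of $\gra$: for each generating edge $\epsilon_{G}(e)$ of $\mathsf{S}_{\cat{M}}(G)$ one has $\mathcal{P}(V(\phi))\left(\epsilon_{G}(e)\right)=E(\phi)(e)\in E\left(G'\right)$, so $V(\phi)$ does send edges to edges.

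The factorization is then a componentwise computation. On vertices $V\left(\mu_{G}\right)$ is the identity, so $\mathsf{N}_{\cat{M}}\left(\hat{\phi}\right)\circ\mu_{G}$ has vertex component $V(\phi)$; on edges $E\left(\mu_{G}\right)(e)=\epsilon_{G}(e)$ is pushed by $\mathsf{N}_{\cat{M}}\left(\hat{\phi}\right)$ to $\mathcal{P}(V(\phi))\left(\epsilon_{G}(e)\right)=E(\phi)(e)$, recovering the edge component of $\phi$. For uniqueness, any competitor $\psi$ with $\mathsf{N}_{\cat{M}}(\psi)\circ\mu_{G}=\phi$ must have vertex function equal to $V(\phi)$ because $\mu_{G}$ is the identity on vertices; since an arrow of the functor-structured category $\ssys=\spa(\mathcal{P})$ (and hence of $\gra$) is completely determined by its vertex function, $\psi=\hat{\phi}$.

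The main obstacle is not any single computation but the bookkeeping needed to collapse the triple composites $\del\mathsf{N}_{\cat{H}}N_{\ssys}$ and $\del_{\mathcal{S}}\mathsf{S}_{\cat{H}}N_{\cat{H}}$: one must verify that $\mathsf{S}_{\cat{H}}$ maps $\cat{M}$ into $\gra$ and $\mathsf{N}_{\cat{H}}$ maps $\gra$ into $\cat{M}$ so that the two deletion operators act as identities on the objects in play. Once this is established, the argument is in essence the restriction of the hypergraph adjunction $\mathsf{S}_{\cat{H}}\dashv\mathsf{N}_{\cat{H}}$ to the full subcategories $\cat{M}$ and $\gra$, and the translation of the $\cat{H}$-incidence square into an image condition on subsets is the only place where the multigraph hypothesis is genuinely used.
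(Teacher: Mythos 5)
Your proof is correct and follows essentially the route the paper intends: the paper omits the argument, deferring to the direct verification in \cite[p.\ 9--10]{simplification1}, and your proposal is exactly that direct verification --- collapsing the deletion functors on multigraphs and simple graphs, reading the $\cat{H}$-incidence square against the inclusion incidence of $\mathsf{N}_{\cat{M}}(G')$, and using that a $\spa(\mathcal{P})$-morphism is determined by its vertex function for uniqueness. No gaps.
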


Having adapted simplification and deletion from $\cat{H}$ to $\cat{M}$,
a next logical step would be to adapt the underlying multigraph functor from $\cat{Q}$ to $\digra$.
However,
the transition from digraphs to graphs has more subtlety than its quiver counterpart.

To facilitate the exposition,
let $\sdigra$ be the full subcategory of $\digra$ consisting of all symmetric digraphs,
i.e.\ digraphs $Q$ such that $(v,w)\in\vec{E}(Q)$ implies $(w,v)\in\vec{E}(Q)$,
which agrees with \cite{dochtermann2009,dochtermann2009-2,hell}.
Like the associated digraph functor $\vec{D}$,
the inclusion functor $\xymatrix{\sdigra\ar[rr]^{N_{\digra}} & & \digra}$ admits a left adjoint given by the symmetric closure.
The proof of the universal property is routine.

\begin{defn}[{Symmetric closure, \cite[Theorem 4.5.5]{howtoproveit}}]
Given a digraph $Q$,
define the symmetric digraph $N^{\diamond}_{\digra}(Q):=\left(\vec{V}(Q),\vec{E}N_{\digra}^{\diamond}(Q)\right)$,
where
\[
\vec{E}N_{\digra}^{\diamond}(Q):=\left\{(v,w):(v,w)\in\vec{E}(Q)\textrm{ or }(w,v)\in\vec{E}(Q)\right\}.
\]
Let $\xymatrix{Q\ar[r]^(0.3){\kappa_{Q}} & N_{\digra}N^{\diamond}_{\digra}(Q)}\in\digra$ be the canonical inclusion homomorphism of $Q$ into $N^{\diamond}_{\digra}(Q)$.
\end{defn}

\begin{thm}[Characterization of $N^{\diamond}_{\digra}$]
If $\xymatrix{Q\ar[r]^(0.3){\phi} & N_{\digra}\left(Q'\right)}\in\digra$,
there is a unique $\xymatrix{N^{\diamond}_{\digra}(Q)\ar[r]^(0.65){\hat{\phi}} & Q'}\in\sdigra$ such that $N_{\digra}\left(\hat{\phi}\right)\circ\kappa_{Q}=\phi$.
\end{thm}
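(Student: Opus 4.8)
The plan is to exploit the fact that both $\kappa_{Q}$ and the inclusion $N_{\digra}$ act as the identity on underlying vertex sets, so that the candidate map $\hat{\phi}$ is completely determined on vertices and the only genuine content lies in checking that it respects edges.

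First I would settle uniqueness. Since $\kappa_{Q}$ is the identity on $\vec{V}(Q)$ and $N_{\digra}$ leaves the underlying vertex function of a homomorphism unchanged, the requirement $N_{\digra}\left(\hat{\phi}\right)\circ\kappa_{Q}=\phi$ forces the vertex function of $\hat{\phi}$ to coincide with that of $\phi$. Because a morphism in $\digra$, and hence in $\sdigra$, is determined by its action on vertices, this pins down $\hat{\phi}$ uniquely, provided it exists.

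For existence, I would define $\hat{\phi}$ to carry the same underlying vertex function as $\phi$ and then verify that this is a legitimate morphism of $\sdigra$. The commutativity condition is immediate, since on vertices all three maps reduce to the single function $\vec{V}(\phi)$. The substantive step is to confirm that $\hat{\phi}$ is edge-preserving from $N^{\diamond}_{\digra}(Q)$ into $Q'$: given $(v,w)\in\vec{E}N_{\digra}^{\diamond}(Q)$, the definition of the symmetric closure yields either $(v,w)\in\vec{E}(Q)$ or $(w,v)\in\vec{E}(Q)$. In the first case, edge-preservation of $\phi$ supplies $\left(\phi(v),\phi(w)\right)\in\vec{E}\left(Q'\right)$ directly. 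The second case is where the hypothesis that $Q'$ is symmetric becomes essential: edge-preservation of $\phi$ gives the reversed pair $\left(\phi(w),\phi(v)\right)\in\vec{E}\left(Q'\right)$, and symmetry of $Q'$ then returns $\left(\phi(v),\phi(w)\right)\in\vec{E}\left(Q'\right)$.

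The only point requiring care, and the main obstacle insofar as there is one, is precisely this second case of the edge-preservation check, since it is here that symmetry of the target is consumed; without it the closure would manufacture edges that $\phi$ could not honor. Everything else is routine, including the observation that $\hat{\phi}$ genuinely lands in $\sdigra$ rather than merely $\digra$, which is automatic because $Q'$ is already an object of $\sdigra$.
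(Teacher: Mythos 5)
Your proposal is correct and is precisely the routine argument the paper alludes to but omits (the paper simply states ``The proof of the universal property is routine''): uniqueness because a $\digra$-morphism is determined by its vertex function and $\kappa_{Q}$ is the identity on vertices, and existence by splitting the edge-preservation check over the two cases of the symmetric closure, using symmetry of $Q'$ in the reversed case.
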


Also,
$N_{\digra}$ admits a right adjoint given by the symmetric interior,
and its universal property is proven in a similar manner.

\begin{defn}[{Symmetric interior, \cite[Lemma 3.6]{neuzerling2016}}]
Given a digraph $Q$,
define the symmetric digraph $N^{\star}_{\digra}(Q):=\left(\vec{V}(Q),\vec{E}N_{\digra}^{\star}(Q)\right)$,
where
\[
\vec{E}N_{\digra}^{\star}(Q):=\left\{(v,w):(v,w)\in\vec{E}(Q)\textrm{ and }(w,v)\in\vec{E}(Q)\right\}.
\]
Let $\xymatrix{N_{\digra}N^{\star}_{\digra}(Q)\ar[r]^(0.7){\nu_{Q}} & Q}\in\digra$ be the canonical inclusion homomorphism of $N^{\star}_{\digra}(Q)$ into $Q$.
\end{defn}

\begin{thm}[Characterization of $N^{\star}_{\digra}$]
If $\xymatrix{N_{\digra}\left(Q'\right)\ar[r]^(0.7){\phi} & Q}\in\digra$,
there is a unique $\xymatrix{Q'\ar[r]^(0.3){\hat{\phi}} & N_{\digra}^{\star}\left(Q\right)}\in\sdigra$ such that $\nu_{Q}\circ N_{\digra}\left(\hat{\phi}\right)=\phi$.
\end{thm}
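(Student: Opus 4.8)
The plan is to exploit that both the inclusion $N_{\digra}$ and the counit $\nu_Q$ act as identities on vertices, so that the statement reduces to a single compatibility check on edges. Recall that a homomorphism in $\digra$ is a vertex map carrying edges to edges, and that $\sdigra$ is a \emph{full} subcategory, so a vertex map between symmetric digraphs lies in $\sdigra$ precisely when it is a homomorphism in $\digra$. For uniqueness, observe that $\nu_Q$ is the canonical inclusion of $N^{\star}_{\digra}(Q)$ into $Q$, hence the identity on the shared vertex set $\vec{V}(Q)$, while $N_{\digra}$ alters neither vertex nor edge maps; thus the equation $\nu_Q \circ N_{\digra}(\hat{\phi}) = \phi$ forces $\hat{\phi}$ to coincide with $\phi$ on vertices, determining it completely.

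For existence, I would take $\hat{\phi}$ to be $\phi$ viewed as a map $\vec{V}(Q') \to \vec{V}(Q) = \vec{V}(N^{\star}_{\digra}(Q))$, and verify that it is a homomorphism into $N^{\star}_{\digra}(Q)$. Concretely, given $(v,w) \in \vec{E}(Q')$, the definition of the symmetric interior requires both $(\phi(v), \phi(w)) \in \vec{E}(Q)$ and $(\phi(w), \phi(v)) \in \vec{E}(Q)$. The former is immediate from $\phi$ being a homomorphism. The latter is the crux: since $Q'$ is symmetric, $(v,w) \in \vec{E}(Q')$ yields $(w,v) \in \vec{E}(Q')$, and applying the homomorphism $\phi$ gives $(\phi(w), \phi(v)) \in \vec{E}(Q)$. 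This lone appeal to the symmetry of the domain is the only non-formal step of the argument.

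It remains to note that $N^{\star}_{\digra}(Q)$ is itself symmetric, which is immediate because its defining edge condition is invariant under swapping coordinates, so that $\hat{\phi}$ genuinely lands in $\sdigra$; and that the factorization $\nu_Q \circ N_{\digra}(\hat{\phi}) = \phi$ holds on the nose, since all three maps agree as vertex functions. I expect no real obstacle here: the proof runs parallel to the symmetric-closure characterization, with symmetry of the domain now serving to place the reversed edge inside the interior rather than to extend the codomain.
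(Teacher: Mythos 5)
Your proof is correct and is exactly the routine argument the paper has in mind: the paper omits the proof, remarking only that the universal property "is proven in a similar manner" to the symmetric closure, and your argument — uniqueness forced on vertices because $\nu_{Q}$ and $N_{\digra}$ act as identities there, existence by using symmetry of the domain $Q'$ to land the reversed edge in the interior — is that standard verification. No gaps.
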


Consequently,
$\sdigra$ is both a reflective and coreflective subcategory of $\digra$,
inheriting much of the latter's structure without modification,
much of which was inherited from $\cat{Q}$ \cite[Theorem 4.3]{simplification1}.

Yet,
$\sdigra$ and $\gra$ are isomorphic as categories through the equivalent symmetric digraph construction $\xymatrix{\gra\ar[r]^(0.4){Z_{\gra}} & \sdigra}$ with the action below \cite[p.\ 21]{rbl1}.
\begin{itemize}

\item $Z_{\gra}(G):=\left(
V(G),
\left\{(v,w):\{v,w\}\in E(G)\right\}
\right)$

\item $Z_{\gra}(f):=f$

\end{itemize}
This construction is ubiquitous to the point that the distinction between a graph and a symmetric digraph is effectively lost.
Considering $Z_{\gra}$ is an isomorphism,
such a blurring is not unwarranted,
but unfortunately,
doing so can lead to fallacious reasoning when trying to lift to the analogous connection between $\cat{Q}$ and $\cat{M}$.

\begin{figure}
\[\xymatrix{
&   &
\cat{M}\ar@/_/[drr]\ar@/^/[dll]^{\vec{D}}_{\dashv}\ar@/_/[dddr]_{\mathsf{S}_{\cat{M}}}^{\dashv}\\
\cat{Q}\ar@/_/[d]_{\mathsf{S}_{\cat{Q}}}^{\dashv}\ar@/^/[urr]^{U} &   &   &   &
\cat{H}\ar@/_/[d]_{\mathsf{S}_{\cat{H}}}^{\dashv}\ar@/_/[ull]_{\del}^{\dashv}\\
\digra\ar@/_/[u]_{\mathsf{N}_{\cat{Q}}}\ar@/^1pc/[dr]^{\star}_{\dashv}\ar@/_1pc/[dr]_{\diamond}^{\dashv} &   &    &   &
\ssys\ar@/_/[u]_{\mathsf{N}_{\cat{H}}}\ar@/^/[dl]^{\del_{\mathcal{S}}}_{\dashv}\\
&
\sdigra\ar[ul]\ar@{<->}[rr]_{\cong} &   &
\gra\ar@/^/[ur]\ar@/_/[uuul]_{\mathsf{N}_{\cat{M}}}
}\]
\caption{Diagram of Classical Graph Theory}
\label{fig:diagram}
\end{figure}
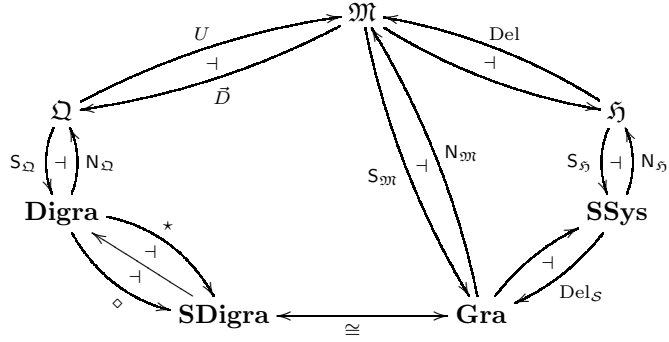

Indeed,
consider the diagram in Figure \ref{fig:diagram}.
Direct computation demonstrates the following functorial equalities,
which not only shows that simplification and deletion commute with the inclusions,
but the connection between the underlying multigraph adjunction and the inclusion $N_{\sdigra}$.

\begin{thm}[Compatibility]\label{thm:compatibility}
The following functorial equalities hold:
\begin{itemize}
\item $N_{\ssys}\mathsf{S}_{\cat{M}}=\mathsf{S}_{\cat{H}}N_{\cat{H}}$,
\item $\mathsf{N}_{\cat{H}}N_{\ssys}=N_{\cat{H}}\mathsf{N}_{\cat{M}}$,
\item $Z_{\gra}\mathsf{S}_{\cat{M}}U=N_{\digra}^{\diamond}\mathsf{S}_{\cat{Q}}$,
\item $N_{\digra}Z_{\gra}\mathsf{S}_{\cat{M}}=\mathsf{S}_{\cat{Q}}\vec{D}$.
\end{itemize}
Consequently, the following isomorphism is natural:
\begin{itemize}
\item $\mathsf{N}_{\cat{M}}\del_{\mathcal{S}}(H)\cong\del\mathsf{N}_{\cat{H}}(H)$ for a set system $H$,
\item $\vec{D}\mathsf{N}_{\cat{M}}(G)\cong\mathsf{N}_{\cat{Q}}N_{\digra}Z_{\gra}(G)$ for a graph $G$.
\end{itemize}
\end{thm}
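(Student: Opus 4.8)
The plan is to establish the four functorial equalities by direct computation on objects and morphisms, and then to obtain the two natural isomorphisms as formal consequences via uniqueness of adjoints. In each equality the two composites manifestly share a domain and codomain, so it suffices to compare their values. Since a morphism of $\ssys$ or $\gra$ is carried by a single vertex map, a morphism of $\cat{H}$ or $\cat{M}$ by a vertex map together with an edge map, and every functor appearing here acts on the underlying data either as the identity or as a direct image, the morphism-level comparisons reduce to the object-level ones; I would therefore concentrate on objects.

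For the first two equalities the point is that deletion is inert on traditional edges. Evaluating $\mathsf{S}_{\cat{H}}N_{\cat{H}}$ on a multigraph $G$ yields the set system with vertex set $V(G)$ and edge family $\{\epsilon_{G}(e):e\in E(G)\}$, every member of which already has one or two elements; hence $\del_{\mathcal{S}}$ changes nothing and $N_{\ssys}\mathsf{S}_{\cat{M}}(G)$ returns the same set system. Dually, $\mathsf{N}_{\cat{H}}$ sends a graph $H$ to a set-system hypergraph whose edges all have one or two endpoints, so $\del$ discards none of them and $N_{\cat{H}}\mathsf{N}_{\cat{M}}(H)$ reproduces $\mathsf{N}_{\cat{H}}N_{\ssys}(H)$. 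For the third and fourth equalities the point is that both routes symmetrize the underlying edge relation. On a quiver $Q$, the composite $Z_{\gra}\mathsf{S}_{\cat{M}}U$ forms the undirected edges $\{\sigma_{Q}(e),\tau_{Q}(e)\}$ and then reorients each in both directions, producing exactly the symmetric closure $N_{\digra}^{\diamond}$ of the digraph $\mathsf{S}_{\cat{Q}}(Q)$; similarly, $\vec{D}(G)$ carries the two triples $(e,v,w)$ and $(e,w,v)$ for each edge with $\epsilon_{G}(e)=\{v,w\}$, so $\mathsf{S}_{\cat{Q}}\vec{D}(G)$ records both orientations $(v,w)$ and $(w,v)$, matching $N_{\digra}Z_{\gra}\mathsf{S}_{\cat{M}}(G)$.

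The two natural isomorphisms then follow by passing to adjoints. Using $N_{\ssys}\dashv\del_{\mathcal{S}}$, $N_{\cat{H}}\dashv\del$, $\mathsf{S}_{\cat{H}}\dashv\mathsf{N}_{\cat{H}}$, and $\mathsf{S}_{\cat{M}}\dashv\mathsf{N}_{\cat{M}}$, the two sides of the first equality are precisely the left adjoints of $\mathsf{N}_{\cat{M}}\del_{\mathcal{S}}$ and of $\del\mathsf{N}_{\cat{H}}$; an equality of left adjoints forces their right adjoints to be naturally isomorphic, giving $\mathsf{N}_{\cat{M}}\del_{\mathcal{S}}\cong\del\mathsf{N}_{\cat{H}}$. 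For the second isomorphism I would rewrite the third equality as $\mathsf{S}_{\cat{M}}U=Z_{\gra}^{-1}N_{\digra}^{\diamond}\mathsf{S}_{\cat{Q}}$, using that $Z_{\gra}$ is invertible, and note that the left side is the left adjoint of $\vec{D}\mathsf{N}_{\cat{M}}$ while the right side is the left adjoint of $\mathsf{N}_{\cat{Q}}N_{\digra}Z_{\gra}$ (via $U\dashv\vec{D}$, $\mathsf{S}_{\cat{Q}}\dashv\mathsf{N}_{\cat{Q}}$, and $N_{\digra}^{\diamond}\dashv N_{\digra}$); uniqueness of right adjoints then yields $\vec{D}\mathsf{N}_{\cat{M}}\cong\mathsf{N}_{\cat{Q}}N_{\digra}Z_{\gra}$.

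I expect the main obstacle to be the set-theoretic bookkeeping in the third and fourth equalities, and in particular the degenerate loop case $\epsilon_{G}(e)=\{v\}$, equivalently $\sigma_{Q}(e)=\tau_{Q}(e)$, where one must confirm that both constructions produce the single pair $(v,v)$ with no spurious asymmetry, and that the collapsing of parallel edges under the various direct-image operations occurs compatibly on the two sides. It is worth noting that the second isomorphism is genuinely only an isomorphism and not an on-the-nose equality, since $\vec{D}\mathsf{N}_{\cat{M}}(G)$ has edge names that are triples while $\mathsf{N}_{\cat{Q}}N_{\digra}Z_{\gra}(G)$ has edge names that are ordered pairs; the adjoint argument delivers the naturality cleanly without my having to exhibit this comparison bijection by hand.
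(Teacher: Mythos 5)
Your proposal is correct and matches the paper's approach: the paper itself justifies the four equalities only by the remark that ``direct computation demonstrates'' them, and your object-level verifications (deletion being inert on traditional edges, both routes symmetrizing the edge relation, including the loop case) together with the reduction of morphism checks to vertex maps supply exactly that computation. Deriving the two natural isomorphisms by composing the adjunctions $N_{\ssys}\dashv\del_{\mathcal{S}}$, $N_{\cat{H}}\dashv\del$, $\mathsf{S}_{\cat{H}}\dashv\mathsf{N}_{\cat{H}}$, $\mathsf{S}_{\cat{M}}\dashv\mathsf{N}_{\cat{M}}$, $U\dashv\vec{D}$, $N_{\digra}^{\diamond}\dashv N_{\digra}$, and $\mathsf{S}_{\cat{Q}}\dashv\mathsf{N}_{\cat{Q}}$ and invoking uniqueness of right adjoints is precisely the intended reading of the theorem's ``consequently.''
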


Notice that through 
the simplification adjunction and the isomorphism $Z_{\gra}$,
the underlying multigraph functor $U$ corresponds to the symmetric closure $N_{\digra}^{\diamond}$,
and the associated digraph functor $\vec{D}$ corresponds to the inclusion $N_{\digra}$.
In fact,
$\vec{D}$ is intertwined with $N_{\digra}$ through both the simplifications and inclusions,
i.e.\ both left and right adjoints.

But,
$\vec{D}$ does not share the same structure as $N_{\digra}$.
If $\vec{D}$ was equivalent to an inclusion of a full subcategory of $\cat{Q}$,
which was both reflective and coreflective,
then $\vec{D}$ would admit not only a left adjoint $U$,
but also a right adjoint.
Sadly,
$\vec{D}$ is not cocontinuous \cite[Lemma 2.3.8]{grilliette2023},
which precludes such a right adjoint.
Said another way,
there is no analogue to the symmetric interior for quivers,
and the lemma demonstrates why.
The structure of quotients in $\cat{Q}$ is an obstruction.

Therefore,
despite superficial similarities,
one should be careful when applying such intuition.
How structures are represented,
and how they are mapped,
play a pivotal role in how they behave.

\subsection{Clique Replacement \& Intersection Graphs}\label{subsec:cliqueintersection}

This section discusses two graph operations,
which are prevalent in the literature
but are not represented in Figure \ref{fig:diagram}.
Both are methods of converting a hypergraph to a simple graph,
which have been studied previously.
First,
the clique-replacement graph takes a set-system hypergraph and replaces each edge with a clique,
taking every pairwise relationship found within the edge.
Notably,
the self-relationship of a vertex with itself inside an edge is omitted.

\begin{defn}[{Clique-replacement graph, \cite[p.\ 188]{dorfler1980}}]
Given a set-system hypergraph $G$,
define the graph $\Gamma(G)$ by
\begin{itemize}
\item $V\Gamma(G):=V(G)$,
\item $E\Gamma(G):=\left\{\{v,w\}:\exists e\in E(G)\left(\{v,w\}\subseteq\epsilon_{G}(e)\right),v\neq w\right\}$.
\end{itemize}
\end{defn}

Likewise,
the intersection graph
transforms a set-system hypergraph by converting the edge set into the vertex set,
and declaring two edges adjacent if their endpoint sets overlap.
If performed on a multigraph,
this process is precisely the line graph.
Again,
note that self-adjacency due to an edge's endpoint set intersecting itself is omitted.

\begin{defn}[{Intersection graph, \cite[p.\ 188]{dorfler1980}}]
Given a set-system hypergraph $G$,
define the graph $L(G)$ by
\begin{itemize}
\item $VL(G):=E(G)$,
\item $EL(G):=\left\{\{e,f\}:\epsilon_{G}(e)\cap\epsilon_{G}(f)\neq\emptyset,e\neq f\right\}$.
\end{itemize}
\end{defn}

Unfortunately,
as written,
neither of these operations can be extended to a functor from $\cat{H}$ to $\gra$.
Contrary to \cite[p.\ 188]{dorfler1980},
simple counterexamples demonstrate that some adjacencies fail to be preserved by each construction,
namely those adjacencies that would force the very self-adjacencies omitted above.
Much like the dissimilarity between $\vec{D}$ and $N_{\digra}$,
rigor must be employed to temper intuition.

\begin{prop}[{Failure of \cite[Proposition 3.3]{dorfler1980}}]
No functor $\xymatrix{\cat{H}\ar[r]^(0.4){M} & \gra}$ satisfies that $M(G)=\Gamma(G)$ for all set-system hypergraphs $G$.
\end{prop}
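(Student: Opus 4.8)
The statement asserts that the clique-replacement assignment $G \mapsto \Gamma(G)$ cannot be completed to a functor $\cat{H}\to\gra$ while keeping its action on objects fixed. To disprove functoriality, I would exhibit a single morphism $\xymatrix{G\ar[r]^{\phi} & G'}\in\cat{H}$ for which no choice of $\Gamma(\phi)$ can be a graph homomorphism $\Gamma(G)\to\Gamma(G')$. Since $M$ is required to agree with $\Gamma$ on objects, the edge sets $E\Gamma(G)$ and $E\Gamma(G')$ are fixed, and $M(\phi)$ must be a function $V\Gamma(G)\to V\Gamma(G')$ carrying each edge of $\Gamma(G)$ to an edge of $\Gamma(G')$; the obstruction will be that no such vertex map exists, regardless of how $M$ is defined on morphisms.

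The key is to engineer a collapse that forces a self-adjacency in the target, which $\Gamma$ systematically omits. First I would take $G$ to have a single hyperedge $e$ with three distinct endpoints, say $\epsilon_{G}(e)=\{a,b,c\}$, so that $\Gamma(G)$ contains the edges $\{a,b\},\{a,c\},\{b,c\}$. Then I would choose $G'$ and a hypergraph homomorphism $\phi$ whose vertex component identifies two of these vertices, e.g.\ sends both $a$ and $b$ to a common vertex $x$ while sending $c$ to $y\neq x$, with $\phi$ mapping $e$ to an edge $e'$ of $G'$ having $\epsilon_{G'}(e')=\{x,y\}$. Because $a,b$ lie in a common edge of $G$ with $a\neq b$, the pair $\{a,b\}$ is an edge of $\Gamma(G)$; but its image under the vertex map would be the degenerate pair $\{x,x\}$, which is not an element of $E\Gamma(G')$ since $\Gamma$ omits self-adjacencies by the condition $v\neq w$. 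Thus the underlying vertex map of $\phi$ fails to be a graph homomorphism, and this failure is forced by the object-level definition of $\Gamma$, independent of any morphism assignment.

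The main obstacle is to verify that $\phi$ genuinely is a morphism in $\cat{H}$, so that the counterexample lives in the stated domain category. Recall from Section \ref{subsubsec:ssystems} that a morphism in $\cat{H}=\left(id_{\set}\downarrow\mathcal{P}\right)$ is a pair $(E(\phi),V(\phi))$ with $\mathcal{P}(V(\phi))\circ\epsilon_{G}=\epsilon_{G'}\circ E(\phi)$; I must check this square commutes for my chosen data. With $V(\phi)(a)=V(\phi)(b)=x$, $V(\phi)(c)=y$, and $E(\phi)(e)=e'$, the composite $\mathcal{P}(V(\phi))\left(\epsilon_{G}(e)\right)=\mathcal{P}(V(\phi))(\{a,b,c\})=\{x,y\}=\epsilon_{G'}(e')$, so the incidence condition holds and $\phi$ is a legitimate morphism. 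Finally, I would argue the impossibility cleanly: any functor $M$ with $M(G)=\Gamma(G)$ and $M(G')=\Gamma(G')$ on objects must assign to $\phi$ a morphism in $\gra$, and since morphisms in $\gra=\spa(\mathcal{P})$ are functions on vertices preserving edges, $M(\phi)$ would in particular have to send the edge $\{a,b\}$ to an edge of $\Gamma(G')$. As no vertex function extending the forced identification $a,b\mapsto x$ can do this, no such $M$ exists, completing the argument.
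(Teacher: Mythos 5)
Your counterexample is essentially the paper's own (a single $3$-element hyperedge collapsed onto a $2$-element edge, so that $\Gamma(G)\cong K_{3}$ and $\Gamma(G')\cong K_{2}$), and your plan correctly identifies what has to be shown: that \emph{no} vertex function from $V\Gamma(G)$ to $V\Gamma(G')$ preserves all edges, ``regardless of how $M$ is defined on morphisms.'' But the execution does not deliver this. In the final step you rule out only those vertex functions ``extending the forced identification $a,b\mapsto x$.'' Nothing forces $M(\phi)$ to agree with $V(\phi)$: the hypothesis constrains $M$ only on objects, so $M(\phi)$ may be \emph{any} element of $\gra\left(\Gamma(G),\Gamma(G')\right)$, including ones sending $a$ and $b$ to different vertices. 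As written, you have only shown that the canonical candidate induced by $V(\phi)$ fails to be a graph homomorphism, which is strictly weaker than the proposition.

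The gap is easily closed, and the paper's proof shows how: one proves that the entire hom-set $\gra\left(\Gamma(G),\Gamma(G')\right)$ is empty, so that \emph{no} choice of $M(\phi)$ is available. For your example --- taking $G'$ to have exactly the two vertices $x,y$ and the single edge $e'$, a restriction you should state explicitly, since additional structure in $G'$ could put a triangle into $\Gamma(G')$ and admit a homomorphism from $K_{3}$ --- this follows from the Pigeonhole Principle: any function from the three vertices of $\Gamma(G)$ to the two vertices of $\Gamma(G')$ must identify two distinct, adjacent vertices, and their common image would have to form a loop, which $E\Gamma(G')$ excludes by the condition $v\neq w$. With that quantification over all vertex functions added, your argument coincides with the paper's.
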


\begin{proof}

A pair of hypergraphs $G$ and $H$ will be demonstrated such that the homomorphism set $\cat{H}(G,H)\neq\emptyset$,
but the homomorphism set $\gra\left(\Gamma(G),\Gamma(H)\right)=\emptyset$.
Consequently,
if such a functor $M$ existed,
it would map a nonempty set of homomorphisms to an empty set of homomorphisms,
which is absurd.

Consider the set-system hypergraphs $G$ and $H$ drawn below.
\begin{center}
\includegraphics[scale=1]{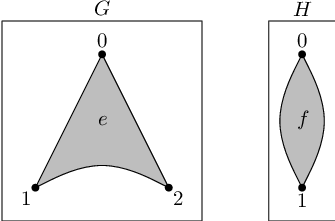}
\end{center}
Define $\xymatrix{G\ar[r]^{\phi} & H}\in\cat{H}$ by $V(\phi)(n):=n\mod 2$ and $E(\phi)(e):=f$.
Applying $\Gamma$ produces the two graphs below.
\begin{center}
\includegraphics[scale=1]{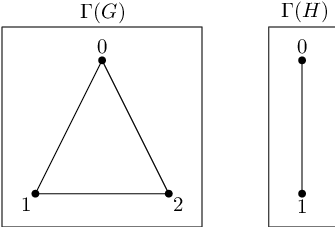}
\end{center}
Say $\xymatrix{\Gamma(G)\ar[r]^{f} & \Gamma(H)}\in\gra$.
Then,
\begin{center}$
\{f(0),f(1)\}
=\{f(1),f(2)\}
=\{f(0),f(2)\}
=\{0,1\}.
$\end{center}
By the Pigeonhole Principle,
there are $v,w\in V(G)$ such that $f(v)=f(w)$ and $v\neq w$.
Without loss of generality,
say $v=0$ and $w=1$.  Then,
\begin{center}$
\{0,1\}
=\{f(0),f(1)\}
=\{f(v),f(w)\}
=\{f(v)\},
$\end{center}
which is absurd.

\end{proof}

\begin{prop}[{Failure of \cite[Proposition 3.4]{dorfler1980}}]
No functor $\xymatrix{\cat{H}\ar[r]^(0.4){M} & \gra}$ satisfies that $M(G)=L(G)$ for all set-system hypergraphs $G$.
\end{prop}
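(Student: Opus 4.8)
The plan is to reprise the strategy of the preceding proposition: I will exhibit a pair of set-system hypergraphs $G$ and $H$ together with a morphism $\phi\in\cat{H}(G,H)$, so that $\cat{H}(G,H)\neq\emptyset$, while at the same time showing $\gra(L(G),L(H))=\emptyset$. Were a functor $M$ with $M=L$ on objects to exist, its action on morphisms would furnish a function from the nonempty set $\cat{H}(G,H)$ into the empty set $\gra(L(G),L(H))$, which is impossible.

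The mechanism that dooms $L$ is dual to the one that doomed $\Gamma$: the defining condition $e\neq f$ in $EL(G)$ discards the self-adjacency of an edge with itself. A hypergraph morphism is free to collapse two \emph{distinct} edges whose endpoint sets intersect --- hence adjacent as vertices of $L(G)$ --- onto a single edge of $H$, whereupon the image of the corresponding edge of $L(G)$ would be a loop, which $EL(H)$ forbids. I would realize this with the most economical collapse. Let $G$ carry a single vertex-pair with two parallel edges, say $V(G)=\{0,1\}$ and $E(G)=\{e_{0},e_{1}\}$ with $\epsilon_{G}(e_{0})=\epsilon_{G}(e_{1})=\{0,1\}$, and let $H$ be the single edge $V(H)=\{0,1\}$, $E(H)=\{f\}$, $\epsilon_{H}(f)=\{0,1\}$. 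Define $\phi$ to be the identity on vertices and $e_{0},e_{1}\mapsto f$ on edges; the incidence condition $\epsilon_{H}(E(\phi)(e_{i}))=\mathcal{P}(V(\phi))(\epsilon_{G}(e_{i}))$ holds with both sides equal to $\{0,1\}$, so $\phi\in\cat{H}(G,H)$ and this set is nonempty.

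Applying $L$ then yields $L(G)$, which is a single edge joining the two distinct vertices $e_{0},e_{1}$ (they share $\{0,1\}$), whereas $L(H)$ is a solitary vertex $f$ with no edges, since $EL(H)$ requires two distinct edges of $H$. The crux is to verify $\gra(L(G),L(H))=\emptyset$: any graph homomorphism $\xymatrix{L(G)\ar[r]^{g} & L(H)}$ would have to send the lone edge $\{e_{0},e_{1}\}$ of $L(G)$ to its image $\{g(e_{0}),g(e_{1})\}$, which collapses to the singleton $\{f\}$ because $L(H)$ has only one vertex, yet $EL(H)=\emptyset$ contains no such edge. I expect the only real care --- the main obstacle --- to lie precisely here: confirming that \emph{no} homomorphism exists, rather than merely that the naively induced map fails, which reduces to the observation that the unique edge of $L(G)$ has nowhere admissible to land in the edgeless $L(H)$. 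The remaining verifications, that $\phi$ respects incidence and that $L(G)$ and $L(H)$ are computed as claimed, are routine.
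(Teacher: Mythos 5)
Your proposal is correct and follows essentially the same strategy as the paper: exhibit $G$ with two distinct edges whose endpoint sets intersect and a morphism collapsing them onto the single edge of $H$, so that $L(G)$ has an edge while $L(H)$ is edgeless, forcing $\gra\left(L(G),L(H)\right)=\emptyset$. The paper's counterexample differs only in the cosmetic choice of $G$ and $H$ (it also collapses vertices via $n\bmod 2$), but the mechanism and the contradiction are identical.
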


\begin{proof}

A pair of hypergraphs $G$ and $H$ will be demonstrated such that the homomorphism set $\cat{H}(G,H)\neq\emptyset$,
but the homomorphism set $\gra\left(L(G),L(H)\right)=\emptyset$.
Consequently,
if such a functor $M$ existed,
it would map a nonempty set of homomorphisms to an empty set of homomorphisms,
which is absurd.

Consider the set-system hypergraphs $G$ and $H$ drawn below.
\begin{center}
\includegraphics[scale=1]{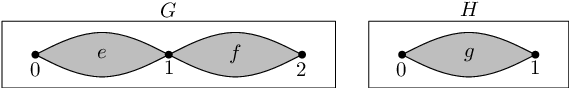}
\end{center}
Define $\xymatrix{G\ar[r]^{\phi} & H}\in\cat{H}$ by $V(\phi)(n):=n\mod 2$ and $E(\phi)(x):=g$.
Applying $L$ produces the two graphs below.
\begin{center}
\includegraphics[scale=1]{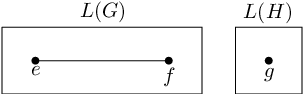}
\end{center}
Say $\xymatrix{L(G)\ar[r]^{h} & L(H)}\in\gra$.
Then,
$h(e)=h(f)=g$,
\[
\{g\}
=\left\{h(e),h(f)\right\}
\in E\left(L(H)\right)
=\emptyset,
\]
which is absurd.

\end{proof}

\subsection{Dual Hypergraph}\label{subsec:dualhypergraph}

This section discusses an operation related to the intersection graph.
The intersection graph converts the edge set of a hypergraph into a new vertex set,
and creates edges to represent how the edges touch one another.
The dual hypergraph pushes this notion further,
completely reversing the roles of the vertex set and edge set,
not unlike the incidence dual of an incidence hypergraph \cite[Lemma 3.1.2]{grilliette2023}.

\begin{defn}[{Dual hypergraph, \cite[p.\ 187]{dorfler1980}}]
Given a set-system hypergraph $G$,
define the set-system hypergraph $d(G)$ by
\begin{itemize}
\item $Vd(G):=E(G)$, $Ed(G):=V(G)$,
\item $\epsilon_{d(G)}(v):=\left\{e:v\in\epsilon_{G}(e)\right\}$.
\end{itemize}
\end{defn}

Sadly,
much like $\Gamma$ and $L$,
this operation cannot be extended to a functor from $\cat{H}$ to itself.
Contrary to \cite[p.\ 187]{dorfler1980},
a simple counterexample demonstrates that some adjacencies fail to be preserved.
While superficially similar to the incidence dual of an incidence hypergraph,
rigor must be employed to temper intuition.

\begin{prop}[{Failure of \cite[Proposition 3.1]{dorfler1980}}]\label{prop:duality}
No functor $\xymatrix{\cat{H}\ar[r]^{M} & \cat{H}}$ satisfies that $M(G)=d(G)$ for all set-system hypergraphs $G$,.
\end{prop}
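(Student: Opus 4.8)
The plan is to reuse the strategy of the two preceding propositions: a functor sends morphisms to morphisms, so it can never map a nonempty hom-set into an empty one. Thus I would exhibit two set-system hypergraphs $G$ and $H$ with $\cat{H}(G,H)\neq\emptyset$ yet $\cat{H}(d(G),d(H))=\emptyset$. Any $M$ with $M(G)=d(G)$ and $M(H)=d(H)$ would then restrict to a function from the nonempty set $\cat{H}(G,H)$ into the empty set $\cat{H}(d(G),d(H))$, which is absurd.

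For the witnesses I would take $G$ to be a triangle, with vertices $V(G)=\{a,b,c\}$ and three edges realizing the endpoint sets $\{a,b\}$, $\{b,c\}$, and $\{a,c\}$, and $H$ to be a single vertex carrying two loops, i.e.\ $V(H)=\{u\}$, $E(H)=\{f_{1},f_{2}\}$ with $\epsilon_{H}(f_{1})=\epsilon_{H}(f_{2})=\{u\}$. A homomorphism $\phi\colon G\to H$ is immediate: send every vertex to $u$ and every edge to (say) $f_{1}$, since the comma-category condition $\mathcal{P}(V(\phi))(\epsilon_{G}(e))=\epsilon_{H}(E(\phi)(e))$ collapses on both sides to the singleton $\{u\}$.

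Next I would compute the two duals directly from the definition of $d$. Because each vertex of the triangle lies in exactly two of its edges, $d(G)$ is again a triangle: its vertices are the three edges of $G$, and each original vertex becomes an edge joining the two triangle-edges that contained it. By contrast, both loops of $H$ meet the unique vertex $u$, so $d(H)$ has vertex set $\{f_{1},f_{2}\}$ and a single edge $u$ with $\epsilon_{d(H)}(u)=\{f_{1},f_{2}\}$, one genuine two-element edge. Any $\cat{H}$-morphism $d(G)\to d(H)$ must send the three vertices of $d(G)$ into $\{f_{1},f_{2}\}$, and the comma condition applied to each of the three edges of $d(G)$ forces the images of the three triangle-vertices to be pairwise distinct; this is impossible by the Pigeonhole Principle, so $\cat{H}(d(G),d(H))=\emptyset$.

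The main obstacle is selecting $H$ so that the obstruction actually appears. If $H$ carried only a single loop, its dual would itself be a single loop, and the triangle would map onto it by collapsing everything, yielding no contradiction; the second loop is exactly what turns $d(H)$ into a proper two-element edge and thereby manufactures the cardinality mismatch. The other point demanding care is the \emph{equality} in the comma-category morphism condition, rather than a mere inclusion: it is this rigidity of $\cat{H}$-homomorphisms that both permits $\phi$ upstairs and blocks every candidate downstairs, so I would verify it explicitly at each step rather than appeal to a looser notion of incidence preservation.
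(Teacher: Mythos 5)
Your proposal is correct and follows exactly the paper's strategy: exhibit hypergraphs $G$ and $H$ with $\cat{H}(G,H)\neq\emptyset$ but $\cat{H}\left(d(G),d(H)\right)=\emptyset$, so that any such $M$ would map a nonempty hom-set into an empty one. Your witnesses (a triangle and a doubled loop, with the contradiction via the Pigeonhole Principle) differ from the paper's smaller example, where the contradiction is that a two-element edge of $d(H)$ would have to equal the singleton image of a one-element edge of $d(G)$, but both hinge on the same rigidity of the equality in the comma-category condition and both are valid.
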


\begin{proof}

A pair of hypergraphs $G$ and $H$ will be demonstrated such that the homomorphism set $\cat{H}(G,H)\neq\emptyset$,
but the homomorphism set $\cat{H}\left(d(G),d(H)\right)=\emptyset$.
Consequently,
if such a functor $M$ existed,
it would map a nonempty set of homomorphisms to an empty set of homomorphisms,
which is absurd.

Consider the set-system hypergraphs $G$ and $H$ drawn below.
\begin{center}
\includegraphics{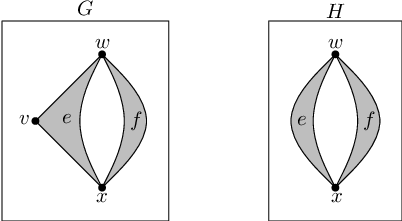}
\end{center}
Define $\xymatrix{G\ar[r]^{\phi} & H}\in\cat{H}$ by
\begin{itemize}
\item $V(\phi)(v):=V(\phi)(w):=w$, $V(\phi)(x):=x$,
\item $E(\phi)(e):=e$, $E(\phi)(f):=f$.
\end{itemize}
Applying $d$ produces the two hypergraphs below.
\begin{center}
\includegraphics{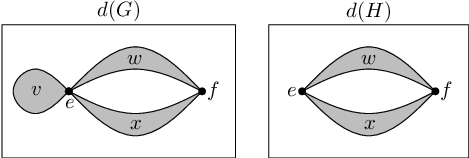}
\end{center}
Say $\xymatrix{d(G)\ar[r]^{\varphi} & d(H)}\in\cat{H}$.
Then,
$E(\varphi)(v)\in Ed(H)=\{w,x\}$.
Without loss of generality,
say $E(\varphi)(v)=w$.
Then,
\[\begin{array}{rcl}
\{e,f\}
&   =   &   \epsilon_{d(H)}\left(w\right)
=\epsilon_{d(H)}\left(E(\varphi)(v)\right)
=\mathcal{P}V(\varphi)\left(\epsilon_{d(G)}(v)\right)
=\mathcal{P}V(\varphi)\left(\{e\}\right)\\
&   =   &   \left\{V(\varphi)(e)\right\},
\end{array}\]
which is absurd.

\end{proof}

\section{An Incidence View of Homomorphisms}\label{sec:incidenceview}

This section presents another perspective on homomorphisms of graphs.
Traditionally,
a graph homomorphism preserves adjacency by the image of an edge in the domain becoming an edge in the codomain.
Unfortunately,
this classical view of homomorphism produces the set-system categories deconstructed in \cite[p.\ 7-20]{grilliette2023},
which possess a litany of categorical issues.

However,
another type of homomorphism exists between hypergraphs,
which weakens the traditional notion of homomorphism in direct analogy to how metric contractions weaken metric isometries.
The category of set-system hypergraphs with these weak homomorphisms can be realized as a lax comma category,
but also is isomorphic to the category of incidence structures.
This revelation results in the diagram in Figure \ref{fig:diagram2},
which demonstrates how traditional graph theory is subsumed by incidence theory.
Analysis of the inclusions,
their adjoints,
and the isomorphism shows that incidence theory cannot be replaced by traditional graph-theoretic methods.
Indeed,
the incidence hypergraph homomorphisms are too distinct from traditional graph homomorphisms for the former to be transformed into the latter without necessarily modifying the hypergraphs themselves.

Section \ref{subsec:incidencehypergraphs} describes the connection between incidence hypergraphs and incidence structures.
Section \ref{subsec:weakhomomorphisms} introduces weak set-system homomorphisms and demonstrates their intrinsic link to maps of incidence structures.
The inclusion of traditional graph homomorphisms to weak set-system homomorphisms admits a right adjoint,
given by a simplicial replacement construction.
Despite this adjunction,
no functor can exist from weak set-system homomorphisms to traditional graph homomorphisms that leaves the hypergraphs themselves unchanged.

\subsection{Incidence Structures \& Hypergraphs}\label{subsec:incidencehypergraphs}

This section considers the relationship between incidence hypergraphs and incidence structures.
Incidence hypergraphs arise naturally as a category of presheaves $\cat{R}:=\set^{\cat{D}}$,
where $\cat{D}$ is the finite category drawn below.
\[\xymatrix{
0   &   &   2\ar[ll]_{y}\ar[rr]^{z}   &   &   1
}\]
An object $G=\left(\check{V}(G),\check{E}(G),I(G),\varsigma_{G},\omega_{G}\right)$ of $\cat{R}$ consists of a vertex set $\check{V}(G)$,
an edge set $\check{E}(G)$,
an incidence set $I(G)$,
a port map $\varsigma_{G}$,
and an attachment map $\omega_{G}$,
which agrees with \cite{chen2018,grilliette2022,grilliette2023,rusnak2018}.

On the other hand,
the category of incidence structures is more easily understood as a functor-structured category $\istr:=\spa\left(\Delta^{\star}\right)$.
An object $G=\left(\check{V}(G),\check{E}(G),I(G)\right)$ of $\istr$ consists of a vertex set $\check{V}(G)$,
an edge set $\check{E}(G)$,
and a set of ordered pairs $I(G)\subseteq\check{V}(G)\times\check{E}(G)$,
which agrees with \cite{beth,bumby1984,dembowski}.
There is a natural embedding $\xymatrix{\istr\ar[r]^(0.6){\mathsf{N}_{\cat{R}}} & \cat{R}}$ with the following action:
\begin{itemize}

\item $\mathsf{N}_{\cat{R}}(G):=\left(\check{V}(G),\check{E}(G),I(G),\varsigma_{G},\omega_{G}\right)$,
where $\varsigma_{G}(v,e):=v$ and $\omega_{G}(v,e):=e$;

\item $\mathsf{N}_{\cat{R}}(f,g):=\left(f,g,I\mathsf{N}_{\cat{R}}(f,g)\right)$,
where $I\mathsf{N}_{\cat{R}}(f,g)(v,e):=\left(f(v),g(e)\right)$.

\end{itemize}
As seen in \cite[p.\ 11]{simplification1},
$\mathsf{N}_{\cat{R}}$ admits a left adjoint $\xymatrix{\cat{R}\ar[r]^(0.4){\mathsf{S}_{\cat{R}}} & \istr}$ with the action below on objects.
\[
\mathsf{S}_{\cat{R}}(G)
=\left(
\check{V}(G),
\check{E}(G),
\left\{\left(\varsigma_{G}(j),\omega_{G}(j)\right):j\in I(G)\right\}
\right)
\]

\subsection{Weak Set-system Homomorphisms}\label{subsec:weakhomomorphisms}

This section introduces a weaker notion of homomorphism for set-system hypergraphs.
Rather than the image of an edge from the domain constituting an edge in the codomain,
the image of an edge from the domain will be contained in an edge in the codomain.
Said notion serves as a generalization of the homomorphisms from \cite[p.\ 84]{bretto} to allow for parallel edges.

\begin{defn}[Weak homomorphism]
Given set-system hypergraphs $G$ and $H$,
a \emph{weak set-system homomorphism} from $G$ to $H$ is a pair $\phi=\left(E^{+}(\phi),V^{+}(\phi)\right)$ satisfying the following conditions:
\begin{itemize}
\item $E^{+}(\phi)$ is a function from $E(G)$ to $E(H)$,
\item $V^{+}(\phi)$ is a function from $V(G)$ to $V(H)$,
\item $
\mathcal{P}\left(V^{+}(\phi)\right)\left(\epsilon_{G}(e)\right)
\subseteq
\epsilon_{H}\left(E^{+}(\phi)(e)\right)
$
for all $e\in E(G)$.
\end{itemize}
\[\xymatrix{
E(G)
\ar[d]_{\epsilon_{G}}
\ar[rr]^{E^{+}(\phi)}
\ar@{}[drr]|-{\subseteq}
&
&
E(H)
\ar[d]^{\epsilon_{H}}\\
\mathcal{P}V(G)
\ar[rr]_{\mathcal{P}\left(V^{+}(\phi)\right)}
&
&
\mathcal{P}V(H)\\
}\]
\end{defn}

If $\xymatrix{G\ar[r]^{\phi} & H}\in\cat{H}$,
then $\phi$ is a weak set-system homomorphism,
which motivates the qualifier ``weak''.
An exercise shows that weak set-system homomorphisms are closed on componentwise composition,
showing that the structure is a supercategory of $\cat{H}$.

\begin{defn}[Category]
Let $\cat{H}^{+}$ be the category of set-system hypergraphs with weak set-system homomorphisms,
and $\xymatrix{\cat{H}\ar[r]^{N_{\cat{H}^{+}}} & \cat{H}^{+}}$ be the inclusion functor.
Also,
let $\xymatrix{\set & \cat{H}^{+}\ar[r]^{V^{+}}\ar[l]_(0.4){E^{+}} & \set}$ be the edge and vertex functors, respectively.
Thus,
$V^{+}N_{\cat{H}^{+}}=V$ and $E^{+}N_{\cat{H}^{+}}=E$.
\end{defn}

Since $\cat{H}$ is not a full subcategory of $\cat{H}^{+}$,
$\cat{H}$ cannot be a reflective or coreflective subcategory $\cat{H}^{+}$.
Despite this,
the inclusion functor $N_{\cat{H}^{+}}$ admits a right adjoint,
which replaces each edge with an abstract simplicial complex.

\begin{defn}[Simplicial replacement]
Given a set-system hypergraph $G$,
define the set-system hypergraph $N_{\cat{H}^{+}}^{\star}(G)$ by
\begin{itemize}
\item $VN_{\cat{H}^{+}}^{\star}(G):=V^{+}(G)$,
\item $EN_{\cat{H}^{+}}^{\star}(G):=\left\{(e,A)\in E^{+}(G)\times\mathcal{P}V^{+}(G):A\subseteq\epsilon_{G}(e)\right\}$,
\item $\epsilon_{N_{\cat{H}^{+}}^{\star}(G)}(e,A):=A$.
\end{itemize}
Define $\xymatrix{N_{\cat{H}^{+}}N_{\cat{H}^{+}}^{\star}(G)\ar[r]^(0.7){\theta_{G}} & G}\in\cat{H}^{+}$ by
\begin{itemize}
\item $V^{+}\left(\theta_{G}\right)(v):=v$,
\item $E^{+}\left(\theta_{G}\right)(e,A):=e$.
\end{itemize}
\end{defn}

\begin{thm}[Universal property of $N_{\cat{H}^{+}}^{\star}$]
Given $\xymatrix{N_{\cat{H}^{+}}(H)\ar[r]^(0.6){\phi} & G}\in\cat{H}^{+}$,
there is a unique $\xymatrix{H\ar[r]^(0.4){\hat{\phi}} & N_{\cat{H}^{+}}^{\star}(G)}\in\cat{H}$ such that $\theta_{G}\circ N_{\cat{H}^{+}}\left(\hat{\phi}\right)=\phi$.
\end{thm}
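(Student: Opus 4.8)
The plan is to read off the unique candidate $\hat{\phi}$ directly from the factorization equation, and then to verify in turn that it is well-defined, that it is a genuine (strict) homomorphism in $\cat{H}$, that it factors $\phi$, and that it is forced. Write $\phi=\left(E^{+}(\phi),V^{+}(\phi)\right)$. Since $V^{+}\left(\theta_{G}\right)$ is the identity and $E^{+}\left(\theta_{G}\right)(e,A)=e$, the requirement $\theta_{G}\circ N_{\cat{H}^{+}}\left(\hat{\phi}\right)=\phi$ already forces the vertex component $V\left(\hat{\phi}\right)=V^{+}(\phi)$ and forces the \emph{first} coordinate of each edge $E\left(\hat{\phi}\right)(e)$ to be $E^{+}(\phi)(e)$. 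The only latitude left is the second coordinate, which I would pin down using the strict-homomorphism law, defining
\[
V\left(\hat{\phi}\right):=V^{+}(\phi),
\qquad
E\left(\hat{\phi}\right)(e):=\left(E^{+}(\phi)(e),\ \mathcal{P}\left(V^{+}(\phi)\right)\left(\epsilon_{H}(e)\right)\right).
\]

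The first---and genuinely load-bearing---step is well-definedness: I must check that $E\left(\hat{\phi}\right)(e)$ actually lands in $EN_{\cat{H}^{+}}^{\star}(G)$, i.e.\ that its second coordinate is a subset of $\epsilon_{G}$ applied to its first coordinate. This amounts exactly to $\mathcal{P}\left(V^{+}(\phi)\right)\left(\epsilon_{H}(e)\right)\subseteq\epsilon_{G}\left(E^{+}(\phi)(e)\right)$, which is precisely the defining inequality of the weak homomorphism $\phi$. This is where the slack of ``weak'' is consumed: the simplicial replacement records, for each edge, the exact image subset as its own formal edge $(e,A)$, so the containment available for $\phi$ becomes an equality available for $\hat{\phi}$. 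Concretely, the strict-homomorphism condition $\mathcal{P}\left(V\left(\hat{\phi}\right)\right)\left(\epsilon_{H}(e)\right)=\epsilon_{N_{\cat{H}^{+}}^{\star}(G)}\left(E\left(\hat{\phi}\right)(e)\right)$ then holds because both sides equal $\mathcal{P}\left(V^{+}(\phi)\right)\left(\epsilon_{H}(e)\right)$---the right-hand side by the second-coordinate reading of the incidence of $N_{\cat{H}^{+}}^{\star}(G)$---so $\hat{\phi}$ is a morphism of $\cat{H}$, not merely of $\cat{H}^{+}$.

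It then remains to confirm the factorization and uniqueness, both of which are routine given the setup. Composing, $V^{+}\left(\theta_{G}\right)\circ V\left(\hat{\phi}\right)=V^{+}(\phi)$ and $E^{+}\left(\theta_{G}\right)\left(E\left(\hat{\phi}\right)(e)\right)=E^{+}(\phi)(e)$, so $\theta_{G}\circ N_{\cat{H}^{+}}\left(\hat{\phi}\right)=\phi$. For uniqueness, any competitor $\hat{\psi}\in\cat{H}$ satisfying the same factorization must, by the two projections of $\theta_{G}$, agree with $\hat{\phi}$ on vertices and on the first edge coordinate; and since $\hat{\psi}$ is a strict homomorphism, its second edge coordinate is forced to equal $\mathcal{P}\left(V\left(\hat{\psi}\right)\right)\left(\epsilon_{H}(e)\right)=\mathcal{P}\left(V^{+}(\phi)\right)\left(\epsilon_{H}(e)\right)$, so $\hat{\psi}=\hat{\phi}$. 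The only real subtlety is the conceptual point already flagged---recognizing that the candidate's edge map must record image subsets and that well-definedness is exactly the weak condition---rather than any difficult computation.
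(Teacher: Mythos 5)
Your proposal is correct and takes essentially the same approach as the paper, which simply exhibits the same candidate $V\bigl(\hat{\phi}\bigr)=V^{+}(\phi)$ and $E\bigl(\hat{\phi}\bigr)(e)=\bigl(E^{+}(\phi)(e),\mathcal{P}\bigl(V^{+}(\phi)\bigr)\bigl(\epsilon_{H}(e)\bigr)\bigr)$ and leaves the verification of well-definedness, factorization, and uniqueness implicit. Your fuller write-up correctly identifies that well-definedness is exactly the weak-homomorphism containment (and, incidentally, uses the correct incidence map $\epsilon_{H}$ where the paper's displayed formula has a typographical $\epsilon_{G}$).
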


\begin{proof}

Define $V\left(\hat{\phi}\right)(v):=V^{+}(\phi)(v)$ and
\[
E\left(\hat{\phi}\right)(e):=\left(
E^{+}(\phi)(e),
\left(\mathcal{P}V^{+}(\phi)\circ\epsilon_{G}\right)(e)
\right).
\]

\end{proof}

As a weak set-system homomorphism relaxes the commutative squares of traditional set-system homomorphisms,
there is no surprise that $\cat{H}^{+}$ can be represented as a lax comma category.
Indeed,
weak set-system homomorphisms are to traditional set-system homomorphisms
as metric contractions are to metric isometries.
However,
the ability to map an edge inside of another edge is reminiscent of incidence structures.
In truth,
all three of these categories are isomorphic.

\begin{thm}[Lax comma category characterization]\label{thm:laxcomma}
The categories $\istr$ and $\cat{H}^{+}$ are isomorphic to $\left(F^{\diamond}\backslash\backslash\Box^{\mathrm{T}}\mathcal{P}^{+}\right)$ as 2-categories.
\end{thm}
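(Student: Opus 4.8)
The plan is to exhibit the two isomorphisms concretely and verify them one categorical level at a time (0-cells, then 1-cells, then 2-cells), first identifying $\cat{H}^{+}$ with the lax comma category and then transporting the result across the incidence-forming bijection to reach $\istr$.

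First I would unwind the 0-cells of $\left(F^{\diamond}\backslash\backslash\Box^{\mathrm{T}}\mathcal{P}^{+}\right)$. Since $F^{\diamond}(A)$ carries the discrete order, every function out of it is automatically monotone, so a 1-cell $\xymatrix{F^{\diamond}(A)\ar[r]^{f} & \Box^{\mathrm{T}}\mathcal{P}^{+}(B)}\in\qord$ is nothing more than a function $f:A\to\mathcal{P}(B)$. Thus a 0-cell is precisely a triple $(A,f,B)$, which is exactly a set-system hypergraph with edge set $A$, vertex set $B$, and incidence function $\epsilon:=f$. The identity assignment $(A,f,B)\mapsto(A,f,B)$ is therefore a bijection onto $\ob\cat{H}^{+}$.

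The crux is the 1-cells, where the dual order $\Box^{\mathrm{T}}$ does the essential work. A 1-cell $(x,\alpha,y)$ carries a 2-cell $\alpha$ in $\qord$, which exists (and is then unique) if and only if the pointwise inequality $f'\circ F^{\diamond}(x)\leq\Box^{\mathrm{T}}\mathcal{P}^{+}(y)\circ f$ holds in $\Box^{\mathrm{T}}\mathcal{P}^{+}(B')$. Evaluating both composites at $e\in A$ yields $f'(x(e))$ on the left and $\mathcal{P}(y)(f(e))$ on the right; because the order on $\Box^{\mathrm{T}}\mathcal{P}^{+}(B')$ is the reverse of containment, this inequality reads $\mathcal{P}(y)(f(e))\subseteq f'(x(e))$. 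Writing $x=E^{+}(\phi)$ and $y=V^{+}(\phi)$, this is verbatim the defining condition of a weak set-system homomorphism, so 1-cells of the lax comma category coincide with the morphisms of $\cat{H}^{+}$. For composition, I would note that $\qord$ has at most one 2-cell between any parallel pair of 1-cells, so the composite 2-cell in the lax comma formula is forced and the composition law collapses to $\left(x',\alpha',y'\right)\circ(x,\alpha,y)=\left(x'\circ x,-,y'\circ y\right)$, i.e.\ componentwise composition; identities are immediate. For the 2-cells, since $\set$ is imbued with trivial 2-cells, a 2-cell $(\sigma,\tau)$ forces $\sigma,\tau$ to be identities, hence $x=z$ and $y=w$, and the coherence equation then holds automatically (both sides are the unique 2-cell of $\qord$). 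Thus $\left(F^{\diamond}\backslash\backslash\Box^{\mathrm{T}}\mathcal{P}^{+}\right)$ is locally discrete, as is $\cat{H}^{+}$, and the two agree as 2-categories.

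Finally, for the $\istr$ factor I would transport everything along the incidence-forming bijection $f\longleftrightarrow I:=\left\{(v,e):v\in f(e)\right\}$, sending a hypergraph $(A,f,B)$ to the incidence structure with $\check{V}:=B$, $\check{E}:=A$, and relation $I\subseteq\check{V}\times\check{E}$; this is a bijection on objects with inverse $I\mapsto\left(e\mapsto\{v:(v,e)\in I\}\right)$. Under this correspondence a pair $(y,x)$ of vertex and edge maps is a morphism of $\istr=\spa\left(\Delta^{\star}\right)$, i.e.\ satisfies $(v,e)\in I(G)\Rightarrow(y(v),x(e))\in I(H)$, exactly when $\mathcal{P}(y)(f(e))\subseteq f'(x(e))$ for all $e$, which is once more the weak-homomorphism condition; composition and the (locally discrete) 2-cells match trivially, completing the triangle of 2-isomorphisms. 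The only genuine obstacle is keeping the variances straight: one must confirm that $\Box^{\mathrm{T}}$ reverses the subset order in precisely the direction that makes the lax 2-cell encode containment of the edge-image, rather than its reverse, and that the edge/vertex roles ($A\mapsto\check{E}$, $B\mapsto\check{V}$) are assigned consistently across all three categories.
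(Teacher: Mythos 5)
Your proposal is correct and takes essentially the same route as the paper's proof: both unwind the lax comma data directly, noting that the trivial 2-cells of $\set$ force local discreteness, that a 0-cell is exactly a set-system hypergraph, and that the unique 2-cell $\alpha$ exists precisely when the dual-order inequality unpacks to the containment $\mathcal{P}(y)(f(e))\subseteq f'(x(e))$ defining a weak homomorphism, before passing to $\istr$ via the incidence-forming bijection $f\leftrightarrow\{(v,e):v\in f(e)\}$. The paper merely packages the final step as an explicit inverse pair of functors $C$ and $D$, which your transport argument supplies in substance.
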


\begin{proof}

Let $\cat{C}:=\left(F^{\diamond}\backslash\backslash\Box^{\mathrm{T}}\mathcal{P}^{+}\right)$.
Since $\set$ is the domain 2-category of both $F^{\diamond}$ and $\Box^{\mathrm{T}}\mathcal{P}^{+}$,
the 2-cells of $\cat{C}$ are trivial.
Thus,
only the 0-cells/objects and 1-cells/morphisms are considered.

If $(Y,\epsilon,X)\in\ob(\cat{C})$,
then $X$ and $Y$ are sets,
and $\epsilon$ is a monotone map from $F^{\diamond}(Y)$ to $\left(\mathcal{P}^{+}(X)\right)^{\mathrm{T}}$,
meaning $\epsilon$ is a function from $Y$ to the power set of $X$.
Thus,
$(Y,\epsilon,X)\in\ob(\cat{H})$.
If $\xymatrix{(Y,\epsilon,X)\ar[rr]^{(g,\alpha,f)} & & \left(Y',\epsilon',X'\right)}\in\cat{C}$,
then $f$ is a function from $X$ to $X'$,
$g$ is a function from $Y$ to $Y'$,
and $\alpha$ is a 2-cell from $\epsilon'\circ F^{\diamond}(g)$ to $\left(\mathcal{P}^{+}(f)\right)^{\mathrm{T}}\circ\epsilon$ in $\qord\left(F^{\diamond}(Y),\left(\mathcal{P}^{+}(X)\right)^{\mathrm{T}}\right)$.
Therefore,
\[
\left(\epsilon'\circ F^{\diamond}(g)\right)(e)
\leq_{\left(\mathcal{P}^{+}(X)\right)^{\mathrm{T}}}
\left(\left(\mathcal{P}^{+}(f)\right)^{\mathrm{T}}\circ\epsilon\right)(e)
\]
for all $e\in Y$,
which translates to $\mathcal{P}(f)\left(\epsilon(e)\right)\subseteq\epsilon'\left(g(e)\right)$.
Hence,
\[
\xymatrix{
(Y,\epsilon,X)
\ar[rr]^{(g,f)}
&
&
\left(Y',\epsilon',X'\right)
}\in\cat{H}^{+}
\]
Define $\xymatrix{\cat{C}\ar[r]^{A} & \cat{H}^{+}\ar[r]^{B} & \cat{C}}$ by the following:
\begin{itemize}
\item $A(Y,\epsilon,X):=(Y,\epsilon,X)$,
$B(Y,\epsilon,X):=(Y,\epsilon,X)$,
\item $A(g,\alpha,f):=(g,f)$,
$B(g,f):=(g,\alpha,f)$,
\end{itemize}
where $\alpha$ is the unique 2-cell from $\epsilon'\circ F^{\diamond}(g)$ to $\left(\mathcal{P}^{+}(f)\right)^{\mathrm{T}}\circ\epsilon$.
A routine check shows that both $A$ and $B$ are functors,
and $B=A^{-1}$.

Define $\xymatrix{\cat{H}^{+}\ar[r]^{C} & \istr\ar[r]^{D} & \cat{H}^{+}}$ by the following:
\begin{itemize}
\item $C(G):=\left(V^{+}(G),E^{+},\left\{(v,e):v\in\epsilon_{G}(e)\right\}\right)$,
$D(H):=\left(\check{E}(H),\epsilon_{H},\check{V}(H)\right)$,
where $\epsilon_{H}(e):=\left\{v:(v,e)\in I(H)\right\}$;
\item $C(\phi):=\left(V^{+}(\phi),E^{+}(\phi)\right)$,
$D(\varphi):=\left(\check{E}(\varphi),\check{V}(\varphi)\right)$.
\end{itemize}
A routine check shows that both $C$ and $D$ are functors,
and $D=C^{-1}$.

\end{proof}

\begin{figure}
\begin{center}$\xymatrix{
&
&
\cat{R}
\ar@/_/[d]_{\mathsf{S}_{\cat{R}}}^{\dashv}\\
\cat{H}^{+}
\ar@/^/[d]^{\star}_{\dashv}
\ar@{<->}[r]_(0.35){\cong}
&
\left(F^{\diamond}\backslash\backslash\Box^{\mathrm{T}}\mathcal{P}^{+}\right)
\ar@{<->}[r]_(0.65){\cong}
&
\istr
\ar@/_/[u]_{\mathsf{N}_{\cat{R}}}\\
\cat{H}
\ar@/^/[u]
}$\end{center}
\caption{Set-system \& Incidence Hypergraphs}
\label{fig:diagram2}
\end{figure}

The functors of this section collectively build the diagram in Figure \ref{fig:diagram2}.
Observe that the incidence-forming functor $\xymatrix{\cat{H}\ar[r]^{\mathcal{I}} & \cat{R}}$ from \cite[Definition 3.3.1]{grilliette2023} can be factored as $\mathcal{I}=\mathsf{N}_{\cat{R}}CN_{\cat{H}^{+}}$.
Likewise,
the incidence-forgetting operator $\mathscr{F}$ from \cite[Definition 3.3.3]{grilliette2023} acts the same as $D\mathsf{S}_{\cat{R}}$ on objects.
Thus,
some immediate facts can be deduced about the inclusions.

\begin{cor}[Continuity]
The functor $N_{\cat{H}^{+}}$ is not continuous,
and $\mathsf{N}_{\cat{R}}$ is not cocontinuous.
No functor $\xymatrix{\cat{H}^{+}\ar[r]^{M} & \cat{H}}$ satisfies that $M(G)=G$ for all set system hypergraphs $G$.
\end{cor}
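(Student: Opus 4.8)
The plan is to handle the three assertions separately, first using the adjunctions already in hand to isolate exactly which half of (co)continuity could possibly survive, and then exhibiting explicit counterexamples for the failing halves. Since $N_{\cat{H}^{+}}$ is a left adjoint, its right adjoint being the simplicial replacement $N_{\cat{H}^{+}}^{\star}$, it is automatically cocontinuous, so only its failure to preserve limits is at issue; dually, $\mathsf{N}_{\cat{R}}$ is a right adjoint, with left adjoint $\mathsf{S}_{\cat{R}}$, hence continuous, so only its failure to preserve colimits must be shown. The third assertion I would treat as a hom-set obstruction, exactly as in the earlier failure propositions.

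For the non-continuity of $N_{\cat{H}^{+}}$, I would compare terminal objects, since $N_{\cat{H}^{+}}$ is the identity on objects and any continuous functor preserves the terminal object. In $\cat{H}=\left(id_{\set}\downarrow\mathcal{P}\right)$ the terminal object carries one vertex $\ast$ but \emph{two} edges, one with $\epsilon=\emptyset$ and one with $\epsilon=\{\ast\}$, because a traditional homomorphism must match the vertex-image of an edge exactly, forcing empty and nonempty edges to be routed to distinct targets. By contrast, through the isomorphism $\cat{H}^{+}\cong\istr$ of Theorem \ref{thm:laxcomma}, the terminal object of $\cat{H}^{+}$ has one vertex and a \emph{single} edge with $\epsilon=\{\ast\}$, since the containment relaxation lets every edge land inside that one edge. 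As the edge functor $E^{+}$ sends isomorphisms to bijections and $2\neq1$, these two objects are not isomorphic, so $N_{\cat{H}^{+}}$ does not preserve the terminal object and is therefore not continuous.

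For the non-cocontinuity of $\mathsf{N}_{\cat{R}}$, I would compare a single coequalizer. Take the incidence structure $H$ with $\check{V}(H)=\{v_{1},v_{2}\}$, $\check{E}(H)=\{e_{1},e_{2}\}$, and $I(H)=\{(v_{1},e_{1}),(v_{2},e_{2})\}$, together with the incidence-free structure $G$ with $\check{V}(G)=\{w\}$, $\check{E}(G)=\{t\}$, $I(G)=\emptyset$, and parallel maps $f,g\colon G\to H$ in $\istr$ given by $f\colon w\mapsto v_{1},\,t\mapsto e_{1}$ and $g\colon w\mapsto v_{2},\,t\mapsto e_{2}$, both valid because the empty $I(G)$ makes the incidence condition vacuous. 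In $\istr$, as a functor-structured category, the coequalizer identifies $v_{1}\sim v_{2}$ and $e_{1}\sim e_{2}$ and gives the pushforward relation $\{(\bar{v},\bar{e})\}$, a single incidence. In $\cat{R}=\set^{\cat{D}}$ the coequalizer is computed pointwise, so the incidence \emph{set} coequalizes freely; since $I(G)=\emptyset$ supplies no identification, the two incidences survive as two parallel incidences over the pair $(\bar{v},\bar{e})$. The canonical comparison from the $\cat{R}$-coequalizer to $\mathsf{N}_{\cat{R}}$ of the $\istr$-coequalizer collapses these two parallel incidences onto one, so it is not injective on incidences and hence not an isomorphism; thus $\mathsf{N}_{\cat{R}}$ fails to preserve this coequalizer.

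For the final claim, I would take $G$ with a single vertex $a$ and single edge $e$ satisfying $\epsilon_{G}(e)=\{a\}$, and $H$ with vertices $c,d$ and single edge $f$ satisfying $\epsilon_{H}(f)=\{c,d\}$. The assignment $a\mapsto c$, $e\mapsto f$ is a weak homomorphism because $\{c\}\subseteq\{c,d\}$, yet no traditional homomorphism $G\to H$ exists, as it would require the singleton image of $\{a\}$ to equal $\{c,d\}$. Were there a functor $M\colon\cat{H}^{+}\to\cat{H}$ fixing every object, it would send the nonempty set $\cat{H}^{+}(G,H)$ into the empty set $\cat{H}(G,H)$, which is absurd. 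The main obstacle across all three parts is correctly reading off the (co)limit structures: specifically, recognizing that $\cat{R}$, being a presheaf topos, coequalizes the incidence set freely and so sustains parallel incidences, whereas $\istr$ forces incidences into a relation and collapses them. This discrepancy, which is precisely what the adjunction $\mathsf{S}_{\cat{R}}\dashv\mathsf{N}_{\cat{R}}$ measures, is the conceptual crux; the only comparable subtlety elsewhere is spotting that the terminal object of $\cat{H}$ requires the auxiliary empty edge.
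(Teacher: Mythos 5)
Your proposal is correct, but it takes a genuinely different route from the paper. The paper's proof is a reduction: since $\mathcal{I}=\mathsf{N}_{\cat{R}}CN_{\cat{H}^{+}}$ with $C$ an isomorphism and $\mathsf{N}_{\cat{R}}$ continuous (being a right adjoint), continuity of $N_{\cat{H}^{+}}$ would force $\mathcal{I}$ to be continuous, contradicting \cite[Lemma 3.3.2]{grilliette2023}; dually, cocontinuity of $\mathsf{N}_{\cat{R}}$ would force $\mathcal{I}$ to be cocontinuous, the same contradiction; and a hypothetical $M$ fixing objects would make $MD\mathsf{S}_{\cat{R}}$ agree with $\mathscr{F}$ on objects, contradicting \cite[Lemma 3.3.4]{grilliette2023}. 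You instead supply explicit witnesses, and all three check out: the terminal object of $\cat{H}$ is $\left(\mathcal{P}(\{\ast\}),id,\{\ast\}\right)$ with two edges while the terminal object of $\cat{H}^{+}\cong\istr$ has a single full edge, so $N_{\cat{H}^{+}}$ fails to preserve the terminal object; your coequalizer is computed pointwise in the presheaf topos $\cat{R}$ (retaining two parallel incidences over $(\bar{v},\bar{e})$) but with the final, image structure in the functor-structured category $\istr$ (a single incidence), so the comparison map is not injective on incidences; and your one-loop-into-one-2-edge example gives $\cat{H}^{+}(G,H)\neq\emptyset$ while $\cat{H}(G,H)=\emptyset$, killing any object-fixing $M$. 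What your approach buys is a self-contained proof that does not lean on the external lemmas about $\mathcal{I}$ and $\mathscr{F}$; what it loses is the conceptual point the paper is making, namely that these failures are exactly the known pathologies of the incidence-forming and incidence-forgetting operators, now factored through $\cat{H}^{+}$ -- which is the observation the discussion following the corollary depends on.
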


\begin{proof}

Since $C$ is an isomorphism and $\mathsf{N}_{\cat{R}}$ admits a left adjoint,
both are continuous.
If $N_{\cat{H}^{+}}$ is continuous,
then $\mathcal{I}$ would be continuous,
contradicting \cite[Lemma 3.3.2]{grilliette2023}.

Since $C$ is an isomorphism and $N_{\cat{H}^{+}}$ admits a right adjoint,
both are cocontinuous.
If $\mathsf{N}_{\cat{R}}$ is cocontinuous,
then $\mathcal{I}$ would be cocontinuous,
contradicting \cite[Lemma 3.3.2]{grilliette2023}.

Say a functor $\xymatrix{\cat{H}^{+}\ar[r]^{M} & \cat{H}}$ satisfies that $M(G)=G$ for all set-system hypergraphs $G$.
Then,
$MD\mathsf{S}_{\cat{R}}$ satisfies that $MD\mathsf{S}_{\cat{R}}(G)=\mathscr{F}(G)$ for all incidence hypergraphs $G$,
which contradicts \cite[Lemma 3.3.4]{grilliette2023}.

\end{proof}

The corollary above states why $\mathcal{I}$ fails to be continuous and cocontinuous.
It is a composite of a left adjoint and a right adjoint,
and mixing the two types yields a functor that cannot be either.
Moreover,
the nonexistence result illustrates that weak set-system homomorphisms are too distinct from traditional set-system homomorphisms for the former to be transformed into the latter without necessarily modifying the hypergraphs themselves.
Since weak set-system homomorphisms correspond to incidence hypergraph homomorphisms and encompass traditional set-system homomorphisms,
the corollary above indicates that the study of incidence hypergraphs subsumes,
but cannot be replaced by,
the study of set-system hypergraphs.

\section{Clique Replacement Revisited}\label{sec:cliquereplacement}

Having now studied a different view of graph homomorphisms,
the clique replacement graph can be revisited from a different perspective.
Rather than passing from $\cat{H}$ to $\gra$,
consider an analogous link between $\cat{R}$ and $\cat{Q}$.
Given a quiver,
one can naturally construct an incidence structure between the vertices and edges based upon the source and target functions.
Indeed,
the process manifests quite effortlessly from the structure of $\set$.
The construction is presented categorically,
heavily using the characterization of the disjoint union as the coproduct in $\set$.

\begin{defn}[Associated incidence hypergraph]
Given a quiver $Q$,
consider the diagram below,
where $\xymatrix{\vec{E}(Q)\ar[r]^(0.35){\varpi_{Q}^{0}} & \{0,1\}\times\vec{E}(Q) & \vec{E}(Q)\ar[l]_(0.35){\varpi_{Q}^{1}}}\in\set$ are the canonical inclusions.
\[\xymatrix{
&
&
\vec{E}(Q)
\ar[drr]^{id_{\vec{E}(Q)}}
\ar[d]_{\varpi_{Q}^{1}}
\ar[dll]_{\tau_{Q}}
\\
\vec{V}(Q)
&
&
\{0,1\}\times\vec{E}(Q)
\ar@{..>}[rr]^{\exists!\omega_{\check{U}(Q)}}
\ar@{..>}[ll]^{\exists!\varsigma_{\check{U}(Q)}}
&
&
\vec{E}(Q)
\\
&
&
\vec{E}(Q)
\ar[urr]_{id_{\vec{E}(Q)}}
\ar[u]_{\varpi_{Q}^{0}}
\ar[ull]^{\sigma_{Q}}
}\]
There is a unique $\xymatrix{\{0,1\}\times\vec{E}(Q)\ar[r]^(0.65){\varsigma_{\check{U}(Q)}}   &   \vec{V}(Q)}\in\set$ such that $\varsigma_{\check{U}(Q)}\circ\varpi_{Q}^{0}=\sigma_{Q}$ and $\varsigma_{\check{U}(Q)}\circ\varpi_{Q}^{1}=\tau_{Q}$,
and there is a unique $\xymatrix{\{0,1\}\times\vec{E}(Q)\ar[r]^(0.65){\omega_{\check{U}(Q)}}   &   \vec{E}(Q)}\in\set$ such that $\omega_{\check{U}(Q)}\circ\varpi_{Q}^{0}=\omega_{\check{U}(Q)}\circ\varpi_{Q}^{1}=id_{\vec{E}(Q)}$.  Concretely,
\begin{itemize}
\item $\varsigma_{\check{U}(Q)}(n,e)=\left\{\begin{array}{cc}
\sigma_{Q}(e),  &   n=0,\\
\tau_{Q}(e),  &   n=1,
\end{array}\right.$
\item $\omega_{\check{U}(Q)}(n,e)=e$.
\end{itemize}
Define the incidence hypergraph
\[
\check{U}(Q):=\left(\vec{V}(Q),\vec{E}(Q),\{0,1\}\times\vec{E}(Q),\varsigma_{\check{U}(Q)},\omega_{\check{U}(Q)}\right).
\]
Given $\xymatrix{Q\ar[r]^{\phi} & Q'}\in\cat{Q}$,
consider the diagram below.
\[\xymatrix{
\vec{E}(Q)
\ar[d]_{\vec{E}(\phi)}
\ar[r]^{\varpi_{Q}^{0}}
&
I\check{U}(Q)
\ar@{..>}[d]^{\exists!I\check{U}(\phi)}
&
\vec{E}(Q)
\ar[d]^{\vec{E}(\phi)}
\ar[l]_{\varpi_{Q}^{1}}
\\
\vec{E}\left(Q'\right)
\ar[r]_{\varpi_{Q'}^{0}}
&
I\check{U}\left(Q'\right)
&
\vec{E}\left(Q'\right)
\ar[l]^{\varpi_{Q}^{1}}
}\]
There is a unique $\xymatrix{I\check{U}\left(Q\right)\ar[r]^{I\check{U}\left(\phi\right)} & I\check{U}\left(Q'\right)}\in\set$ such that
$I\check{U}(\phi)\circ\varpi_{Q}^{n}=\varpi_{Q'}^{n}\circ\vec{E}(\phi)$ for $n=0,1$.
Concretely,
$I\check{U}(\phi)(n,e)=\left(n,\vec{E}(\phi)(e)\right)$.
Define $\check{U}(\phi):=\left(\vec{V}(\phi),\vec{E}(\phi),I\check{U}(\phi)\right)$.
A routine calculation shows that $\check{U}$ is a functor from $\cat{Q}$ to $\cat{R}$.
\end{defn}

Please note that $\check{U}$ treats the source and target of an edge differently,
tagging the former with 0 and the latter with 1.
Consequently,
a directed loop in a quiver becomes a 2-edge,
not a 1-edge,
in the resulting incidence hypergraph.

The functor $\check{U}$ admits a right adjoint,
which replaces an edge in an incidence hypergraph with a directed clique.
Again,
this operation arises naturally from the structure of $\set$,
using a kernel pair rather than a coproduct.

\begin{defn}[Clique replacement quiver]
Given an incidence hypergraph $G$,
let $\xymatrix{\vec{E}\vec{R}(G)\ar@/^/[r]^{p_{G}^{0}}\ar@/_/[r]_{p_{G}^{1}} & I(G)}\in\set$ be a kernel pair for $\omega_{G}$.
\[\xymatrix{
&
&
\vec{E}\vec{R}(G)
\ar[dl]_{p_{G}^{0}}
\ar[dr]^{p_{G}^{1}}
\ar@{}[dd]|-{\textrm{pullback}}
\\
\check{V}(G)
&
I(G)
\ar[l]_{\varsigma_{G}}
\ar[dr]_{\omega_{G}}
&
&
I(G)
\ar[r]^{\varsigma_{G}}
\ar[dl]^{\omega_{G}}
&
\check{V}(G)
\\
&
&
\check{E}(G)
}\]
Concretely,
\begin{itemize}
\item $\vec{E}\vec{R}(G)=\left\{(i,j):\omega_{G}(i)=\omega_{G}(j)\right\}$,
\item $p_{G}^{0}(i,j)=i$, $p_{G}^{1}(i,j)=j$.
\end{itemize}
Define $\vec{R}(G):=\left(\check{V}(G),\vec{E}\vec{R}(G),\varsigma_{G}\circ p_{G}^{0},\varsigma_{G}\circ p_{G}^{1}\right)$.
Consider the diagram below.
\[\xymatrix{
\vec{E}\vec{R}(G)
\ar[rr]^{\varpi_{\vec{R}(G)}^{0}}
\ar[drr]_{p_{G}^{0}}
&
&
I\check{U}\vec{R}(G)
\ar@{..>}[d]^{\exists!I\left(\check{\theta}_{G}\right)}
&
&
\vec{E}\vec{R}(G)
\ar[ll]_{\varpi_{\vec{R}(G)}^{1}}
\ar[dll]^{p_{G}^{1}}
\\
&
&
I(G)
}\]
There is a unique $\xymatrix{I\check{U}\vec{R}(G)\ar[rr]^{I\left(\check{\theta}_{G}\right)} & & I(G)}\in\set$ such that $I\left(\check{\theta}_{G}\right)\circ\varpi_{\vec{R}(G)}^{n}=p_{G}^{n}$ for $n=0,1$.
Concretely,
\[
I\left(\check{\theta}_{G}\right)\left(n,(i,j)\right)=\left\{\begin{array}{rcl}
i,  &   n=0,\\
j,  &   n=1.
\end{array}\right.
\]
Define $\check{\theta}_{G}:=\left(id_{\check{V}(G)},\omega_{G}\circ p_{G}^{0},I\left(\check{\theta}_{G}\right)\right)$.
One can check that $\xymatrix{\check{U}\vec{R}(G)\ar[r]^(0.6){\check{\theta}_{G}} & G}\in\cat{R}$.
\end{defn}

\begin{thm}[Right adjoint characterization]
Given $\xymatrix{\check{U}(Q)\ar[r]^(0.6){\phi} & G}\in\cat{R}$,
there is a unique $\xymatrix{Q\ar[r]^(0.4){\hat{\phi}} & \vec{R}(G)}\in\cat{Q}$ such that
$\check{\theta}_{G}\circ\check{U}\left(\hat{\phi}\right)=\phi$.
\end{thm}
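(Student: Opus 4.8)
The statement is the couniversal property exhibiting $\check{\theta}_{G}$ as the counit of an adjunction $\check{U}\dashv\vec{R}$, so the plan is to read off $\hat{\phi}$ directly from the incidence component of $\phi$ and then verify existence and uniqueness by componentwise diagram chasing. The guiding observation is that a morphism $\phi\colon\check{U}(Q)\to G$ in $\cat{R}$ carries three components $\check{V}(\phi)\colon\vec{V}(Q)\to\check{V}(G)$, $\check{E}(\phi)\colon\vec{E}(Q)\to\check{E}(G)$, and $I(\phi)\colon\{0,1\}\times\vec{E}(Q)\to I(G)$, subject to naturality over $\cat{D}$. Attachment-naturality gives $\omega_{G}\circ I(\phi)=\check{E}(\phi)\circ\omega_{\check{U}(Q)}$, and since $\omega_{\check{U}(Q)}(n,e)=e$ this reads $\omega_{G}(I(\phi)(0,e))=\omega_{G}(I(\phi)(1,e))=\check{E}(\phi)(e)$ for every $e$. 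Thus the pair $(I(\phi)(0,e),I(\phi)(1,e))$ lies in the kernel pair $\vec{E}\vec{R}(G)=\{(i,j):\omega_{G}(i)=\omega_{G}(j)\}$, which is exactly an edge of $\vec{R}(G)$.

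First I would define $\hat{\phi}$ by $\vec{V}(\hat{\phi}):=\check{V}(\phi)$ and $\vec{E}(\hat{\phi})(e):=(I(\phi)(0,e),I(\phi)(1,e))$; the previous paragraph guarantees the edge map is well-defined into $\vec{E}\vec{R}(G)$. To confirm $\hat{\phi}$ is a quiver morphism, I would invoke port-naturality $\varsigma_{G}\circ I(\phi)=\check{V}(\phi)\circ\varsigma_{\check{U}(Q)}$. Since $\sigma_{\vec{R}(G)}=\varsigma_{G}\circ p_{G}^{0}$ and $\varsigma_{\check{U}(Q)}(0,e)=\sigma_{Q}(e)$, evaluating at $\vec{E}(\hat{\phi})(e)$ gives $\sigma_{\vec{R}(G)}(\vec{E}(\hat{\phi})(e))=\varsigma_{G}(I(\phi)(0,e))=\check{V}(\phi)(\sigma_{Q}(e))=\vec{V}(\hat{\phi})(\sigma_{Q}(e))$; the target condition is identical, replacing $p_{G}^{0}$, the tag $0$, and $\sigma_{Q}$ by $p_{G}^{1}$, the tag $1$, and $\tau_{Q}$.

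Next I would verify the factorization $\check{\theta}_{G}\circ\check{U}(\hat{\phi})=\phi$ one component at a time, using $\check{U}(\hat{\phi})=(\check{V}(\phi),\vec{E}(\hat{\phi}),I\check{U}(\hat{\phi}))$ with $I\check{U}(\hat{\phi})(n,e)=(n,\vec{E}(\hat{\phi})(e))$ together with the concrete formulas for $\check{\theta}_{G}$. On vertices $\check{\theta}_{G}$ is the identity, so the vertex component is $\check{V}(\phi)$. On edges, $\omega_{G}\circ p_{G}^{0}$ applied to $\vec{E}(\hat{\phi})(e)$ returns $\omega_{G}(I(\phi)(0,e))=\check{E}(\phi)(e)$. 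On incidences, $I(\check{\theta}_{G})(n,\vec{E}(\hat{\phi})(e))$ selects $I(\phi)(0,e)$ when $n=0$ and $I(\phi)(1,e)$ when $n=1$, i.e.\ it returns $I(\phi)(n,e)$; all three components agree with $\phi$.

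Finally, for uniqueness I would suppose $\psi\colon Q\to\vec{R}(G)$ also satisfies $\check{\theta}_{G}\circ\check{U}(\psi)=\phi$. Comparing vertex components and using that $\check{\theta}_{G}$ is the identity on vertices forces $\vec{V}(\psi)=\check{V}(\phi)$. Writing $\vec{E}(\psi)(e)=(a,b)\in\vec{E}\vec{R}(G)$, the incidence component of $\check{\theta}_{G}\circ\check{U}(\psi)$ at $(n,e)$ equals $a$ when $n=0$ and $b$ when $n=1$; matching these against $I(\phi)(n,e)$ yields $a=I(\phi)(0,e)$ and $b=I(\phi)(1,e)$, so $\vec{E}(\psi)=\vec{E}(\hat{\phi})$ and hence $\psi=\hat{\phi}$. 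The argument is essentially forced once the kernel-pair description of $\vec{E}\vec{R}(G)$ is matched with the two incidences produced by $I(\phi)$; the only point demanding genuine care — and the closest thing to an obstacle — is the bookkeeping that keeps the coproduct tags $0,1$ built into $\check{U}$ aligned with the two kernel-pair projections $p_{G}^{0},p_{G}^{1}$ built into $\vec{R}$, and recognizing that well-definedness of the edge map is exactly attachment-naturality rather than an extra hypothesis.
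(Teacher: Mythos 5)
Your proposal is correct and takes essentially the same route as the paper: the paper defines $\vec{E}\left(\hat{\phi}\right)$ via the universal property of the kernel pair after observing that $\omega_{G}\circ I(\phi)\circ\varpi_{Q}^{0}=\omega_{G}\circ I(\phi)\circ\varpi_{Q}^{1}$, which is exactly your elementwise formula $e\mapsto\left(I(\phi)(0,e),I(\phi)(1,e)\right)$ justified by attachment-naturality. You merely spell out the factorization and uniqueness checks that the paper leaves implicit.
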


\begin{proof}

Consider the diagram below.
\[\xymatrix{
I\check{U}(Q)
\ar[d]_{I(\phi)}
&
\vec{E}(Q)
\ar[l]_{\varpi_{Q}^{0}}
\ar[r]^{\varpi_{Q}^{1}}
&
I\check{U}(Q)
\ar[d]^{I(\phi)}
\\
I(G)
\ar[dr]_{\omega_{G}}
&
&
I(G)\ar[dl]^{\omega_{G}}
\\
&
\check{E}(G)
}\]
As $\phi$ is an incidence hypergraph homomorphism,
one has
\[\begin{array}{rcl}
\omega_{G}\circ I(\phi)\circ\varpi_{Q}^{0}
&   =   &   \check{E}(\phi)\circ\omega_{\check{U}(Q)}\circ\varpi_{Q}^{0}
=\check{E}(\phi)\circ id_{\vec{E}(Q)}
=\check{E}(\phi)\circ\omega_{\check{U}(Q)}\circ\varpi_{Q}^{1}\\
&   =   &   \omega_{G}\circ I(\phi)\circ\varpi_{Q}^{1}.
\end{array}\]
There is a unique $\xymatrix{\vec{E}(Q)\ar[r]^{\vec{E}\left(\hat{\phi}\right)} & \vec{E}\vec{R}(G)}\in\set$ such that $p_{G}^{n}\circ\vec{E}\left(\hat{\phi}\right)=I(\phi)\circ\varpi_{Q}^{n}$ for $n=0,1$.
Define $\hat{\phi}:=\left(\check{V}(\phi),\vec{E}\left(\hat{\phi}\right)\right)$.

\end{proof}

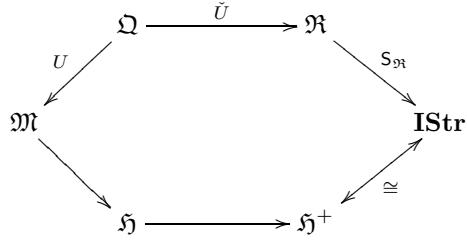
\begin{figure}
\begin{center}$\xymatrix{
&   \cat{Q}\ar[rr]^{\check{U}}\ar[dl]_{U} &   &   \cat{R}\ar[dr]^{\mathsf{S}_{\cat{R}}}\\
\cat{M}\ar[dr] &   &   &   &   \istr\\
&   \cat{H}\ar[rr] &   &   \cat{H}^{+}\ar@{<->}[ur]_{\cong}
}$\end{center}
\caption{Preservation of Vertices and Edges}
\label{fig:hexagon}
\end{figure}

With $\check{U}$ and $\vec{R}$ in hand,
consider Figure \ref{fig:hexagon}.
Observe that all of the functors in Figure \ref{fig:hexagon} leave vertices and edges unchanged,
merely changing how incidence and adjacency are represented.
Consequently,
if $\xymatrix{\cat{H}^{+}\ar[r]^{C} & \istr}$ is the isomorphism from Theorem \ref{thm:laxcomma},
the hexagon commutes,
meaning corresponding hexagon of right adjoints commutes up to isomorphism.

\begin{thm}[Coherence of clique replacement]\label{thm:cliquereplacement}
The hexagon in Figure \ref{fig:hexagon} commutes:
$\mathsf{S}_{\cat{R}}\check{U}
=
CN_{\cat{H}^{+}}N_{\cat{H}}U$.
Consequently,
the following isomorphism is natural for all set-system hypergraphs $G$.
\begin{center}$
\vec{R}\mathsf{N}_{\cat{R}}C(G)
\cong
\vec{D}\del N_{\cat{H}^{+}}^{\star}(G)
$\end{center}
\end{thm}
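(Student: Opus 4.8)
The plan is to verify the functorial equality $\mathsf{S}_{\cat{R}}\check{U} = CN_{\cat{H}^{+}}N_{\cat{H}}U$ by direct computation, and then to deduce the stated natural isomorphism by passing to right adjoints, exploiting that equal functors have naturally isomorphic right adjoints.

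First I would establish the equality on objects. Fix a quiver $Q$. Along the lower path, $U(Q)$ is the set-system hypergraph with vertex set $\vec{V}(Q)$, edge set $\vec{E}(Q)$, and incidence $\epsilon_{U(Q)}(e) = \{\sigma_Q(e), \tau_Q(e)\}$; the inclusions $N_{\cat{H}}$ and $N_{\cat{H}^{+}}$ leave this untouched, and by Theorem \ref{thm:laxcomma} applying $C$ yields the incidence structure $(\vec{V}(Q), \vec{E}(Q), \{(v,e) : v \in \{\sigma_Q(e), \tau_Q(e)\}\})$. Along the upper path, $\check{U}(Q)$ has incidence set $\{0,1\} \times \vec{E}(Q)$ with $\varsigma_{\check{U}(Q)}(n,e)$ equal to $\sigma_Q(e)$ or $\tau_Q(e)$ and $\omega_{\check{U}(Q)}(n,e) = e$; then $\mathsf{S}_{\cat{R}}$ collapses the incidence set onto its image $\{(\varsigma_{\check{U}(Q)}(j), \omega_{\check{U}(Q)}(j)) : j \in I\check{U}(Q)\} = \{(\sigma_Q(e), e) : e\} \cup \{(\tau_Q(e), e) : e\}$, which is precisely $\{(v,e) : v \in \{\sigma_Q(e), \tau_Q(e)\}\}$. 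The two objects are literally equal, not merely isomorphic.

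The point I would flag as the genuinely subtle step is the behavior on directed loops, which is what makes the object-level equality an honest equality. When $\sigma_Q(e) = \tau_Q(e) = v$, the coproduct construction $\check{U}$ produces two distinct incidences $(0,e)$ and $(1,e)$, yet $\mathsf{S}_{\cat{R}}$ sends both to the single pair $(v,e)$; simultaneously $\epsilon_{U(Q)}(e) = \{v,v\} = \{v\}$ contributes only $(v,e)$ on the other side. Thus the image-collapse performed by $\mathsf{S}_{\cat{R}}$ mirrors exactly the absorption of a repeated element in the unordered pair $\{\sigma_Q(e), \tau_Q(e)\}$, so the doubled tags and the set-merge cancel against each other. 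On morphisms both composites send $\phi$ to the pair $(\vec{V}(\phi), \vec{E}(\phi))$ on vertices and edges, the incidence component being forced in each category, so the equality extends to morphisms and the hexagon commutes on the nose.

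For the isomorphism I would invoke uniqueness of right adjoints. Each arrow in the hexagon carries a right adjoint recorded earlier: $\check{U} \dashv \vec{R}$, $\mathsf{S}_{\cat{R}} \dashv \mathsf{N}_{\cat{R}}$, $U \dashv \vec{D}$, $N_{\cat{H}} \dashv \del$, $N_{\cat{H}^{+}} \dashv N_{\cat{H}^{+}}^{\star}$, while $C$ is an isomorphism with inverse $D = C^{-1}$. Since the right adjoint of a composite is the reverse composite of the right adjoints, the equal functors $\mathsf{S}_{\cat{R}}\check{U}$ and $CN_{\cat{H}^{+}}N_{\cat{H}}U$ have right adjoints $\vec{R}\mathsf{N}_{\cat{R}}$ and $\vec{D}\del N_{\cat{H}^{+}}^{\star}D$ respectively, which are therefore naturally isomorphic. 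Precomposing with $C$ and using $DC = id_{\cat{H}^{+}}$ gives $\vec{R}\mathsf{N}_{\cat{R}}C \cong \vec{D}\del N_{\cat{H}^{+}}^{\star}$, and evaluating at a set-system hypergraph $G$ yields the claimed isomorphism. The only real care needed here is bookkeeping: confirming the reversed order of composition of the right adjoints and verifying that it is $DC$, not $CD$, that cancels to the identity on $\cat{H}^{+}$.
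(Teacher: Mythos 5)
Your proposal is correct and follows the same route the paper takes: the paper justifies the hexagon by observing that every functor in Figure \ref{fig:hexagon} fixes the vertex and edge sets and only re-encodes incidence (your explicit check, including the collapse of the two incidences of a directed loop under $\mathsf{S}_{\cat{R}}$ matching the absorption $\{v,v\}=\{v\}$, is exactly the computation being invoked), and then obtains the displayed isomorphism by passing to the reversed composite of right adjoints and cancelling $DC=id_{\cat{H}^{+}}$. Nothing is missing.
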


Effectively,
this theorem states that given a set-system hypergraph,
the following two constructions yield isomorphic quivers:
\begin{enumerate}

\item $\vec{D}\del N_{\cat{H}^{+}}^{\star}$
\begin{enumerate}
\item replace each edge with an abstract simplicial complex,
\item remove all nontraditional edges,
\item replace each 1-edge with a directed cycle and each 2-edge with a directed 2-cycle;
\end{enumerate}

\item $\vec{R}\mathsf{N}_{\cat{R}}C$
\begin{enumerate}
\item convert into an incidence structure,
\item view as an incidence hypergraph,
\item replace each edge with a directed clique.
\end{enumerate}

\end{enumerate}

Since both $\vec{D}$ and $\vec{R}$ produce the equivalent of an undirected graph,
the above theorem suggests what an undirected clique replacement functor should be.

\begin{defn}[Factored clique replacement]
Define the composite functor $R:=\mathsf{S}_{\cat{M}}\del N_{\cat{H}^{+}}^{\star}N_{\cat{H}^{+}}$.
\end{defn}

Tracing through each construction yields a concrete represention of the action of $R$,
which is precisely the action of the classical clique replacement graph $\Gamma$ without the artificial removal of 1-edges.

\begin{lem}[Action of $R$]
If $\xymatrix{G\ar[r]^{\phi} & G'}\in\cat{H}$,
then
\begin{itemize}
\item $VR(G)=V(G)$,
\item $ER(G)=\left\{\{v,w\}:\exists e\in E(G)\left(\{v,w\}\subseteq\epsilon_{G}(e)\right)\right\}$,
\item $R(\phi)=V(\phi)$.
\end{itemize}
\end{lem}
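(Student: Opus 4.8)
The plan is to evaluate the composite $R=\mathsf{S}_{\cat{M}}\del N_{\cat{H}^{+}}^{\star}N_{\cat{H}^{+}}$ one functor at a time, working from the innermost outward and tracking the vertex set, edge set, and incidence function at each stage, and then to repeat the same bookkeeping for the morphism component. Since $N_{\cat{H}^{+}}$ is the inclusion, $N_{\cat{H}^{+}}(G)$ is simply $G$ regarded in $\cat{H}^{+}$, so $V^{+}(N_{\cat{H}^{+}}(G))=V(G)$, $E^{+}(N_{\cat{H}^{+}}(G))=E(G)$, and the incidence is unchanged. Applying the simplicial replacement $N_{\cat{H}^{+}}^{\star}$ then yields the set-system hypergraph with vertices $V(G)$ and edges the labeled subsets $(e,A)$ satisfying $A\subseteq\epsilon_{G}(e)$, with $\epsilon(e,A)=A$.

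Next, the deletion $\del$ discards every edge $(e,A)$ whose endpoint set has cardinality outside $\{1,2\}$; in particular every pair $(e,\emptyset)$ is removed, leaving exactly the $(e,A)$ with $1\leq\card(A)\leq 2$, while the vertex set remains $V(G)$. Finally I expand $\mathsf{S}_{\cat{M}}=\del_{\mathcal{S}}\mathsf{S}_{\cat{H}}N_{\cat{H}}$: the inclusion $N_{\cat{H}}$ changes nothing, the simplification $\mathsf{S}_{\cat{H}}$ replaces the edge set by the image $\{A:1\leq\card(A)\leq 2,\ \exists e\,(A\subseteq\epsilon_{G}(e))\}$, collapsing parallel edges and dropping the labels $e$, and $\del_{\mathcal{S}}$ removes nothing further because every surviving subset already has size $1$ or $2$. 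Writing a subset of size $1$ or $2$ as $\{v,w\}$ with $v=w$ permitted gives $VR(G)=V(G)$ and $ER(G)=\{\{v,w\}:\exists e\in E(G)\,(\{v,w\}\subseteq\epsilon_{G}(e))\}$, as claimed.

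For the morphism statement I follow the vertex component through the same chain. The map $N_{\cat{H}^{+}}(\phi)$ has vertex component $V(\phi)$; the right adjoint $N_{\cat{H}^{+}}^{\star}$ acts on vertices by $V^{+}(\cdot)$ and on edges by $(e,A)\mapsto\bigl(E^{+}(\phi)(e),\mathcal{P}V^{+}(\phi)(A)\bigr)$, which is well-typed precisely by the weak-homomorphism inequality $\mathcal{P}(V^{+}(\phi))(\epsilon_{G}(e))\subseteq\epsilon_{G'}(E^{+}(\phi)(e))$; $\del$ merely restricts, so the vertex component stays $V(\phi)$. Since a morphism in $\ssys$, and hence in its full subcategory $\gra$, is completely determined by its underlying vertex map, both $\mathsf{S}_{\cat{H}}$ and $\del_{\mathcal{S}}$ leave it as $V(\phi)$, giving $R(\phi)=V(\phi)$.

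I expect the only genuine subtlety to be the edge-set bookkeeping through $\mathsf{S}_{\cat{H}}$: one must confirm that the simplification truly collapses the labeled family $\{(e,A)\}$ to the unlabeled family of subsets $A$, that the empty set is excluded by the lower bound in $\del$ rather than in $\del_{\mathcal{S}}$, and that the singletons arising from $v=w$ are exactly the $1$-edges distinguishing $R$ from the classical $\Gamma$. Everything else is routine unwinding of the definitions of the four constituent functors.
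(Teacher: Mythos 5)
Your proposal is correct and is exactly the ``tracing through each construction'' that the paper invokes without writing out: unwind $N_{\cat{H}^{+}}$, $N_{\cat{H}^{+}}^{\star}$, $\del$, and $\mathsf{S}_{\cat{M}}=\del_{\mathcal{S}}\mathsf{S}_{\cat{H}}N_{\cat{H}}$ in turn on objects and on the vertex component of morphisms. Your bookkeeping of the labeled edges $(e,A)$, the exclusion of $\emptyset$ at the $\del$ stage, the collapse of labels under $\mathsf{S}_{\cat{H}}$, and the retention of singletons (the $1$-edges distinguishing $R$ from $\Gamma$) all match the intended computation.
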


If $\xymatrix{\gra\ar[r]^(0.4){Z_{\gra}} & \sdigra}$ is the isomorphism from Figure \ref{fig:diagram},
then Theorem \ref{thm:compatibility} and the result above provide an alternate factorization of $R$,
which explicitly shows it to be $\vec{R}$ under layers of converting between categories.
Hence,
the two operations legitimately correspond to each other,
merely in two different contexts.

\begin{prop}[Alternate factorization through $\cat{R}$]
The functor $R$ can be factored as $
Z_{\gra}^{-1}N_{\digra}^{\diamond}\mathsf{S}_{\cat{Q}}\vec{R}\mathsf{N}_{\cat{R}}CN_{\cat{H}^{+}}
$.
\end{prop}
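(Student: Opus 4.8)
The plan is to read off the factorization from the natural isomorphism of Theorem~\ref{thm:cliquereplacement} together with the functorial equalities of Theorem~\ref{thm:compatibility}. Theorem~\ref{thm:cliquereplacement} supplies, naturally in $G$, the quiver isomorphism $\vec{R}\mathsf{N}_{\cat{R}}C(G)\cong\vec{D}\del N_{\cat{H}^{+}}^{\star}(G)$; since $N_{\cat{H}^{+}}$ is the identity on objects, precomposing with it preserves this isomorphism, giving $\vec{R}\mathsf{N}_{\cat{R}}CN_{\cat{H}^{+}}\cong\vec{D}\del N_{\cat{H}^{+}}^{\star}N_{\cat{H}^{+}}$. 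I would then postcompose both sides with $Z_{\gra}^{-1}N_{\digra}^{\diamond}\mathsf{S}_{\cat{Q}}$ and show the resulting right-hand functor collapses exactly to $R=\mathsf{S}_{\cat{M}}\del N_{\cat{H}^{+}}^{\star}N_{\cat{H}^{+}}$.

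The heart of the collapse is the identity $Z_{\gra}^{-1}N_{\digra}^{\diamond}\mathsf{S}_{\cat{Q}}\vec{D}=\mathsf{S}_{\cat{M}}$. The fourth equality of Theorem~\ref{thm:compatibility} is $\mathsf{S}_{\cat{Q}}\vec{D}=N_{\digra}Z_{\gra}\mathsf{S}_{\cat{M}}$, so $N_{\digra}^{\diamond}\mathsf{S}_{\cat{Q}}\vec{D}=N_{\digra}^{\diamond}N_{\digra}Z_{\gra}\mathsf{S}_{\cat{M}}$. Because the symmetric closure fixes every symmetric digraph, $N_{\digra}^{\diamond}N_{\digra}=id_{\sdigra}$, and because $Z_{\gra}$ is an isomorphism, $Z_{\gra}^{-1}Z_{\gra}=id_{\gra}$; applying $Z_{\gra}^{-1}$ therefore leaves $\mathsf{S}_{\cat{M}}$. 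Substituting this into the right-hand side yields $\mathsf{S}_{\cat{M}}\del N_{\cat{H}^{+}}^{\star}N_{\cat{H}^{+}}=R$, which is the claimed factorization.

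The main obstacle is that Theorem~\ref{thm:cliquereplacement} furnishes only an isomorphism, not an equality: the arrow of $\vec{R}\mathsf{N}_{\cat{R}}C(G)$ indexed by the incidence pair $((v,e),(v',e))$ matches the arrow $((e,\{v,v'\}),v,v')$ of $\vec{D}\del N_{\cat{H}^{+}}^{\star}(G)$, so the two quivers have genuinely different edge sets. To obtain the strict equality the statement asserts, I would note that this isomorphism is the identity on vertices and respects source and target, while $\mathsf{S}_{\cat{Q}}$ retains only the set of source-target arc pairs; hence $\mathsf{S}_{\cat{Q}}$ carries the isomorphism to an honest equality of digraphs, after which every step of the preceding paragraph is strict. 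Equivalently, and more transparently, I would verify the factorization against the Action of $R$ lemma by tracing a fixed $G$ through the chain: $CN_{\cat{H}^{+}}(G)$ has incidences $\{(v,e):v\in\epsilon_{G}(e)\}$, $\vec{R}\mathsf{N}_{\cat{R}}$ builds the directed clique with an arc $v\to v'$ for every $e$ and every $v,v'\in\epsilon_{G}(e)$, $\mathsf{S}_{\cat{Q}}$ discards the labels to leave the already-symmetric digraph with arc set $\{(v,v'):\exists e,\,v,v'\in\epsilon_{G}(e)\}$, $N_{\digra}^{\diamond}$ fixes it, and $Z_{\gra}^{-1}$ returns the graph with edge set $\{\{v,v'\}:\exists e,\,\{v,v'\}\subseteq\epsilon_{G}(e)\}=ER(G)$; on morphisms each functor acts as the underlying vertex map, so the composite sends $\phi$ to $V(\phi)=R(\phi)$.
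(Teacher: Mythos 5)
Your proposal is correct and follows essentially the same route as the paper: the paper's proof is the same chain of equalities run in the opposite direction, inserting $Z_{\gra}^{-1}Z_{\gra}$ and $N_{\digra}^{\diamond}N_{\digra}=id_{\sdigra}$ into $R=\mathsf{S}_{\cat{M}}\del N_{\cat{H}^{+}}^{\star}N_{\cat{H}^{+}}$ and then invoking Theorems \ref{thm:compatibility} and \ref{thm:cliquereplacement}. Your extra paragraph explaining why the natural isomorphism of Theorem \ref{thm:cliquereplacement} becomes a strict equality after applying $\mathsf{S}_{\cat{Q}}$ addresses a point the paper leaves implicit, and is a welcome clarification rather than a deviation.
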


\begin{proof}

Observe that $N_{\digra}^{\diamond}N_{\digra}=id_{\sdigra}$,
so
\[\begin{array}{rcl}
R
&   =   &   Z_{\gra}^{-1}Z_{\gra}R\\
&   =   &   Z_{\gra}^{-1}N_{\digra}^{\diamond}N_{\digra}Z_{\gra}R\\
&   =   &   Z_{\gra}^{-1}N_{\digra}^{\diamond}N_{\digra}Z_{\gra}\mathsf{S}_{\cat{M}}\del N_{\cat{H}^{+}}^{\star}N_{\cat{H}^{+}}\\
&   =   &   Z_{\gra}^{-1}N_{\digra}^{\diamond}\mathsf{S}_{\cat{Q}}\vec{D}\del N_{\cat{H}^{+}}^{\star}N_{\cat{H}^{+}}\\
&   =   &   Z_{\gra}^{-1}N_{\digra}^{\diamond}\mathsf{S}_{\cat{Q}}\vec{R}\mathsf{N}_{\cat{R}}CN_{\cat{H}^{+}}
\end{array}\]
by Theorems \ref{thm:compatibility} and \ref{thm:cliquereplacement}.

\end{proof}

Considering how obfuscated $R$ is from $\vec{R}$,
one might expect that like $\mathcal{I}$ from \cite[Definition 3.3.1]{grilliette2023},
$R$ would not admit an adjoint.
However,
$R$ does indeed admit a right adjoint,
given by a closure operation that creates abstract simplicial complexes.
Considering that $N_{\cat{H}^{+}}$ is involved in both factorizations of $R$,
and the right adjoint of $N_{\cat{H}^{+}}$ is the simplicial replacement,
this characterization is likely not coincidental.

\begin{defn}[Simplicial closure]
Given a graph $G$,
define the set-system hypergraph $R^{\star}(G)$ by
\begin{itemize}
\item $VR^{\star}(G):=V(G)$,
\item $ER^{\star}(G):=\left\{A\in\mathcal{P}V(G):\forall v,w\in A\left(\{v,w\}\in E(G)\right)\right\}$,
\item $\epsilon_{R^{\star}(G)}(A):=A$.
\end{itemize}
Define $\xymatrix{RR^{\star}(G)\ar[r]^(0.65){\theta_{G}} & G}\in\gra$ by $\theta_{G}(v):=v$.
\end{defn}

\begin{thm}[Universal property]
Given $\xymatrix{R(H)\ar[r]^{f} & G}\in\gra$,
there is a unique $\xymatrix{H\ar[r]^{\hat{f}} & R^{\star}(G)}\in\cat{H}$ such that $\theta_{G}\circ R\left(\hat{f}\right)=f$.
\end{thm}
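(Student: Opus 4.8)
The plan is to recognize this statement as the couniversal property exhibiting $R^{\star}$ as a right adjoint to $R$, so the proof will construct $\hat{f}$, verify it is a legitimate morphism of $\cat{H}$, check the triangle equation $\theta_{G}\circ R\left(\hat{f}\right)=f$, and confirm uniqueness. First I would observe that both requirements on $\hat{f}$ essentially force its definition. Since $R(\psi)=V(\psi)$ for every $\cat{H}$-morphism $\psi$ (by the Action of $R$ lemma) and $\theta_{G}$ is the identity on vertices, the equation $\theta_{G}\circ R\left(\hat{f}\right)=f$ reduces at the level of vertex functions to $V\left(\hat{f}\right)=f$; I therefore set $V\left(\hat{f}\right):=f$, a well-defined function into $VR^{\star}(G)=V(G)$. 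Because $\epsilon_{R^{\star}(G)}$ is the identity inclusion $A\mapsto A$, the incidence-square condition defining a morphism of $\cat{H}$ then forces $E\left(\hat{f}\right)(e)=\mathcal{P}(f)\left(\epsilon_{H}(e)\right)$ for every $e\in E(H)$.

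The one step requiring genuine content — and the step I expect to be the main obstacle — is verifying that this forced edge map actually lands in $ER^{\star}(G)$; that is, for each $e\in E(H)$ the set $A:=\mathcal{P}(f)\left(\epsilon_{H}(e)\right)=\left\{f(v):v\in\epsilon_{H}(e)\right\}$ must satisfy $\{a,b\}\in E(G)$ for all $a,b\in A$. Given such $a=f(v)$ and $b=f(w)$ with $v,w\in\epsilon_{H}(e)$, one has $\{v,w\}\subseteq\epsilon_{H}(e)$, so by the definition of $ER(H)$ the pair $\{v,w\}$ is an edge of $R(H)$. As $f$ is a homomorphism in $\gra$, it carries this edge to $\mathcal{P}(f)\left(\{v,w\}\right)=\{a,b\}\in E(G)$, as required. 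I would stress that this argument covers the degenerate case $a=b$ by taking $v=w$: the singleton $\{v\}$ lies in $ER(H)$ precisely because $R$ retains $1$-edges, and it is exactly this retention — absent from the classical $\Gamma$ — that rescues the construction from the functoriality failure exhibited in Section~\ref{subsec:cliqueintersection}.

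With $\hat{f}$ in hand, the incidence square commutes by construction, since $\epsilon_{R^{\star}(G)}\left(E\left(\hat{f}\right)(e)\right)=E\left(\hat{f}\right)(e)=\mathcal{P}(f)\left(\epsilon_{H}(e)\right)=\mathcal{P}\left(V\left(\hat{f}\right)\right)\left(\epsilon_{H}(e)\right)$, so $\hat{f}$ is a genuine $\cat{H}$-morphism, and the triangle identity holds because $R\left(\hat{f}\right)$ and $\theta_{G}$ compose to $f$ on vertices. For uniqueness, any $\hat{g}$ with $\theta_{G}\circ R\left(\hat{g}\right)=f$ must satisfy $V\left(\hat{g}\right)=f$ by the same vertex-level computation, whereupon the incidence square for $\hat{g}$, together with $\epsilon_{R^{\star}(G)}$ being the identity inclusion, forces $E\left(\hat{g}\right)(e)=\mathcal{P}(f)\left(\epsilon_{H}(e)\right)=E\left(\hat{f}\right)(e)$; hence $\hat{g}=\hat{f}$. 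This completes the verification and establishes $R\dashv R^{\star}$.
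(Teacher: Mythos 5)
Your proposal is correct and follows the same route as the paper: the paper's entire proof consists of defining $\hat{f}:=\left(E\left(\hat{f}\right),f\right)$ with $E\left(\hat{f}\right)(e):=\left(\mathcal{P}(f)\circ\epsilon_{H}\right)(e)$, which is exactly your construction. You additionally spell out the verifications the paper leaves to the reader --- in particular the well-definedness check that $\mathcal{P}(f)\left(\epsilon_{H}(e)\right)$ lands in $ER^{\star}(G)$ via the retained $1$-edges of $R(H)$, plus the triangle identity and uniqueness --- all of which are accurate.
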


\begin{proof}

Define $\hat{f}:=\left(E\left(\hat{f}\right),f\right)$,
where $E\left(\hat{f}\right)(e):=\left(\mathcal{P}(f)\circ\epsilon_{H}\right)(e)$.

\end{proof}

\section{Duality \& Intersection Revisited}\label{sec:duality}

Much like how clique replacement was recovered by passing through weak set-system homomorphisms,
the intersection graph can be as well.
Indeed,
the intersection graph is meant to serve as a duality,
converting edges to vertices.
Sadly,
it is not truly a dual as it is not invertible.

On the other hand,
$\cat{R}$ has incidence duality,
which is self-inverting and is deeply related to the Laplacian product introduced in \cite{ih3}.
Recall from \cite[Lemma 3.1.2]{ih3} that the incidence dual $\xymatrix{\cat{R}\ar[r]^{\Box^{\#}} & \cat{R}}$ acts in the following way:
\begin{itemize}
\item $G^{\#}=\left(\check{E}(G),\check{V}(G),I(G),\omega_{G},\varsigma_{G}\right)$,
\item $\phi^{\#}=\left(\check{E}(\phi),\check{V}(\phi),I(\phi)\right)$.
\end{itemize}

Through the simplification adjunction on $\cat{R}$,
incidence duality can be conjugated to act on $\istr$,
as well as $\cat{H}^{+}$ through the isomorphism in Theorem \ref{thm:laxcomma}.

\begin{defn}[Incidence dual for $\istr$ \& $\cat{H}^{+}$]
Define $\Box^{\top}:=S_{\cat{R}}\Box^{\#}\mathsf{I}_{\cat{R}}$
and $\Box^{\ddag}:=C^{-1}\Box^{\top}C$,
where $\xymatrix{\cat{H}^{+}\ar[r]^{C} & \istr}$ is the isomorphism from Theorem \ref{thm:laxcomma}.
\end{defn}

Direct calculation shows the concrete action of both dualities and that both are self-inverting.
Notably,
$\Box^{\top}$ is precisely the dual structure of an incidence structure \cite{beth,dembowski},
demonstrating that $\Box^{\#}$ generalizes $\Box^{\top}$ to allow parallel incidences.
Moreover,
$\Box^{\ddag}$ is the dual hypergraph of a set-system hypergraph \cite[p.\ 187]{dorfler1980},
but without the artificial restriction to avoid isolated vertices.

\begin{lem}[Action of $\Box^{\top}$ \& $\Box^{\ddag}$]
If $\xymatrix{G\ar[r]^{\phi} & G'}\in\istr$,
then
\begin{itemize}
\item $G^{\top}=\left(\check{E}(G),\check{V}(G),\left\{(e,v):(v,e)\in I(G)\right\}\right)$,
\item $\phi^{\top}=\left(\check{E}(\phi),\check{V}(\phi)\right)$.
\end{itemize}
If $\xymatrix{H\ar[r]^{\varphi} & H'}\in\cat{H}^{+}$,
then
\begin{itemize}
\item $G^{\ddag}=\left(E(G),V(G),\epsilon_{G^{\ddag}}\right)$,
where $\epsilon_{G^{\ddag}}(v):=\left\{e:v\in\epsilon_{G}(e)\right\}$;
\item $\varphi^{\ddag}=\left(E(\varphi),V(\varphi)\right)$.
\end{itemize}
\end{lem}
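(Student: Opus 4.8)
The plan is to compute each of the two conjugated dualities directly from their definitions and verify self-inversion by straightforward substitution. The statement has two halves---one for $\Box^{\top}$ on $\istr$ and one for $\Box^{\ddag}$ on $\cat{H}^{+}$---and the second half follows from the first by transporting through the isomorphism $C$, so the real content lies in unwinding $\Box^{\top}:=S_{\cat{R}}\Box^{\#}\mathsf{I}_{\cat{R}}$. (I note that the excerpt writes $S_{\cat{R}}$ and $\mathsf{I}_{\cat{R}}$, which I read as $\mathsf{S}_{\cat{R}}$ and $\mathsf{N}_{\cat{R}}$ respectively, the simplification left adjoint and the embedding of $\istr$ into $\cat{R}$ established in Section~\ref{subsec:incidencehypergraphs}.)

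First I would trace an object $G=(\check{V}(G),\check{E}(G),I(G))\in\istr$ through the composite. Applying $\mathsf{N}_{\cat{R}}$ yields the incidence hypergraph with incidence set $I(G)$, port map $\varsigma_G(v,e):=v$, and attachment map $\omega_G(v,e):=e$. Applying the incidence dual $\Box^{\#}$ swaps the roles of $\check{V}$ and $\check{E}$ and interchanges $\varsigma$ and $\omega$, so the port map now reads off the edge-coordinate and the attachment map reads off the vertex-coordinate of each pair $(v,e)\in I(G)$. Finally applying $\mathsf{S}_{\cat{R}}$ collapses the incidence set to the image $\{(\varsigma(j),\omega(j)):j\in I(G)\}$, which after the swap is exactly $\{(e,v):(v,e)\in I(G)\}\subseteq\check{E}(G)\times\check{V}(G)$. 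This reproduces the claimed formula for $G^{\top}$, and the analogous one-line chase on a morphism $\phi$ gives $\phi^{\top}=(\check{E}(\phi),\check{V}(\phi))$, since each of the three functors acts componentwise on the vertex and edge functions.

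For the $\cat{H}^{+}$ half I would invoke the explicit description of $C$ from the proof of Theorem~\ref{thm:laxcomma}: it sends a set-system hypergraph $G$ to the incidence structure with incidence set $\{(v,e):v\in\epsilon_G(e)\}$. Pushing $G$ through $C$, applying $\Box^{\top}$ (which swaps coordinates), and pulling back through $C^{-1}=D$ produces the set-system hypergraph whose edge set is $V(G)$ and whose incidence function sends a former vertex $v$ to $\{e:v\in\epsilon_G(e)\}$, matching the stated $\epsilon_{G^{\ddag}}$. The morphism formula $\varphi^{\ddag}=(E(\varphi),V(\varphi))$ follows likewise, the swap of vertex and edge functions being absorbed by $C$ and $D$. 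Self-inversion of $\Box^{\top}$ is immediate once the formula is in hand, since $(e,v)\mapsto(v,e)$ applied twice is the identity; self-inversion of $\Box^{\ddag}$ then follows formally because $\Box^{\ddag}=C^{-1}\Box^{\top}C$ and $(\Box^{\top})^2=\mathrm{id}$ force $(\Box^{\ddag})^2=\mathrm{id}$.

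\textbf{The main obstacle} is not any single hard step but bookkeeping: one must be careful that $\mathsf{S}_{\cat{R}}$ genuinely collapses parallel incidences (so that the resulting $I(G^{\top})$ is a \emph{set} of pairs, not a multiset), and that the swap performed by $\Box^{\#}$ is correctly composed with the port/attachment conventions of $\mathsf{N}_{\cat{R}}$ before simplification. The only subtlety worth flagging explicitly is that $\Box^{\top}$ on $\istr$ is literally relation-reversal only because simplification has already identified coincident incidences; phrased on $\cat{H}^{+}$ via $C$ and $D$, this is exactly why $\Box^{\ddag}$ recovers the Dörfler dual hypergraph \emph{without} the artificial restriction to non-isolated vertices, as the surrounding discussion observes.
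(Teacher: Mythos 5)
Your proposal is correct and matches the paper's intent exactly: the paper gives no explicit proof, stating only that "direct calculation shows the concrete action of both dualities," and your unwinding of $\mathsf{S}_{\cat{R}}\Box^{\#}\mathsf{N}_{\cat{R}}$ on objects and morphisms, followed by conjugation through $C$ and $D$, is precisely that calculation (including the correct reading of the notation $\mathsf{I}_{\cat{R}}$ as the embedding $\mathsf{N}_{\cat{R}}$). The self-inversion remarks are a harmless bonus beyond the lemma's statement.
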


However,
notice that $\Box^{\ddag}$ is an autofunctor on $\cat{H}^{+}$,
not $\cat{H}$ like \cite[p.\ 187]{dorfler1980} claims.
Proposition \ref{prop:duality} demonstrates that $\Box^{\ddag}$ cannot be restricted to $\cat{H}$ and remain functorial.
Indeed,
incidence duality of this type appears to require more than what traditional graph homomorphisms can allow.

Now,
observe what happens if $\Box^{\ddag}$ is inserted into the factorization of $R$.
Direct calculation shows that the resulting functor recovers the action of the classical intersection graph $L$ without the artificial removal of 1-edges.

\begin{defn}[Factored intersection]
Define the composite functor
\[
\Lambda:=\mathsf{S}_{\cat{M}}\del N_{\cat{H}^{+}}^{\star}\Box^{\ddag}N_{\cat{H}^{+}}.
\]
\end{defn}

\begin{lem}[Action of $\Lambda$]
If $\xymatrix{G\ar[r]^{\phi} & G'}\in\cat{H}$,
then
\begin{itemize}
\item $V\Lambda(G):=E(G)$,
\item $E\Lambda(G):=\left\{\{e,f\}:\epsilon_{G}(e)\cap\epsilon_{G}(f)\neq\emptyset\right\}$.
\item $\Lambda(\phi)=E(\phi)$.
\end{itemize}
\end{lem}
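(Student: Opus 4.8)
The plan is to observe that $\Lambda$ and the factored clique replacement $R$ share the same outer functor. Setting $P:=\mathsf{S}_{\cat{M}}\del N_{\cat{H}^{+}}^{\star}$, one has $R=PN_{\cat{H}^{+}}$ and $\Lambda=P\Box^{\ddag}N_{\cat{H}^{+}}$, so $\Lambda$ is obtained from $R$ by inserting the incidence dual $\Box^{\ddag}$ just before the outer functor. First I would isolate the action of $P$ on a single object and a single morphism of $\cat{H}^{+}$, reading it off from the computation already carried out for the action of $R$. On objects $N_{\cat{H}^{+}}$ is the identity, so $P$ agrees with $R$ and sends an object $H$ to the graph with $VP(H)=V(H)$ and $EP(H)=\{\{v,w\}:\exists e\in E(H)\,(\{v,w\}\subseteq\epsilon_{H}(e))\}$. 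On a weak homomorphism $\psi$, the simplicial replacement $N_{\cat{H}^{+}}^{\star}$ produces a $\cat{H}$-map whose vertex component is $V^{+}(\psi)$ and whose edge component is $(e,A)\mapsto(E^{+}(\psi)(e),\mathcal{P}V^{+}(\psi)(A))$, after which $\del$ merely restricts and $\mathsf{S}_{\cat{M}}$ retains only the vertex map; hence $P(\psi)=V^{+}(\psi)$.

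Next I would feed in the incidence dual. By the action of $\Box^{\ddag}$, the object $\Box^{\ddag}N_{\cat{H}^{+}}(G)=G^{\ddag}$ has vertex set $E(G)$, edge set $V(G)$, and incidence $\epsilon_{G^{\ddag}}(v)=\{e:v\in\epsilon_{G}(e)\}$. Substituting $H=G^{\ddag}$ into the object-action of $P$ gives $V\Lambda(G)=E(G)$ and $E\Lambda(G)=\{\{e,f\}:\exists v\in V(G)\,(\{e,f\}\subseteq\epsilon_{G^{\ddag}}(v))\}$. The remaining step is the set-theoretic rewriting: $\{e,f\}\subseteq\epsilon_{G^{\ddag}}(v)$ holds precisely when $v\in\epsilon_{G}(e)$ and $v\in\epsilon_{G}(f)$, so the existential quantifier over $v$ collapses to $\epsilon_{G}(e)\cap\epsilon_{G}(f)\neq\emptyset$, yielding the claimed edge set. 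Because the action of $R$ carries no $v\neq w$ restriction, the case $e=f$ survives and produces the $1$-edges $\{e\}$ exactly when $\epsilon_{G}(e)\neq\emptyset$, which is precisely $L$ ``without the artificial removal of $1$-edges.''

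For the morphism claim I would compute $\Lambda(\phi)=P(\phi^{\ddag})$ and use $P(\psi)=V^{+}(\psi)$, so that $\Lambda(\phi)=V^{+}(\phi^{\ddag})$. By the action of $\Box^{\ddag}$, the dual exchanges the vertex and edge components, whence the vertex map of $\phi^{\ddag}$ is the original edge map $E(\phi)$. Therefore $\Lambda(\phi)=E(\phi)$, as claimed.

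I expect the only genuine obstacle to be morphism-level bookkeeping, namely justifying that the computation for the action of $R$ transfers to $P$ on objects and morphisms that need not lie in the image of $N_{\cat{H}^{+}}$; indeed $\phi^{\ddag}$ is in general only a weak homomorphism, not a strict one. On objects this transfer is immediate since $N_{\cat{H}^{+}}$ is the identity there. On morphisms it requires confirming the stated action of $N_{\cat{H}^{+}}^{\star}$ on an arbitrary weak homomorphism $\psi$, obtained by transposing the composite $\psi\circ\theta_{H}$ (with $\theta_{H}$ the counit of $N_{\cat{H}^{+}}\dashv N_{\cat{H}^{+}}^{\star}$) through the universal property of $N_{\cat{H}^{+}}^{\star}$, together with the fact that $\gra$-morphisms are determined by their vertex maps, so that $\mathsf{S}_{\cat{M}}$ discards the edge data. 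Once this reduction is in place, everything else is the routine rewriting above.
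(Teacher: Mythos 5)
Your proposal is correct and follows the same route the paper intends: the lemma is stated as a ``direct calculation'' obtained by inserting $\Box^{\ddag}$ into the factorization of $R$, and your computation via the shared prefix $P=\mathsf{S}_{\cat{M}}\del N_{\cat{H}^{+}}^{\star}$ is exactly that calculation, including the necessary check that $P(\psi)=V^{+}(\psi)$ holds for arbitrary weak homomorphisms since $\phi^{\ddag}$ need not be strict. No gaps.
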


\printbibliography

\end{document}